\documentclass[11pt,letterpaper]{amsart}
\usepackage{amsmath}
\usepackage{amssymb}
\usepackage{dsfont}
\usepackage{hyperref}
\usepackage{xcolor}
\usepackage{graphicx}

\newtheorem{theorem}{Theorem}[section]
\newtheorem{corollary}[theorem]{Corollary}
\newtheorem{lemma}[theorem]{Lemma}
\newtheorem{proposition}[theorem]{Proposition}
\newtheorem{conjecture}[theorem]{Conjecture}
\theoremstyle{definition}
\newtheorem{definition}[theorem]{Definition}
\newtheorem{example}[theorem]{Example}
\numberwithin{equation}{section}
\newcommand{\R}{\mathbb{R}}
\newcommand{\Z}{\mathbb{Z}}
\newcommand{\A}{\mathbf{A}}
\newcommand{\D}{\mathcal{D}} 
\newcommand{\E}{\mathcal{E}}
\newcommand{\J}{\mathcal{J}}
\newcommand{\W}{\mathcal{W}}
\newcommand{\one}{\mathds{1}}

\begin{document}

\title{Translational self-affine tiles in $\R^n$}

\author{Li-Xiang An}
\address{School of Mathematics and Statistics and Hubei Key Laboratory of Mathematical Sciences, Central China Normal University, Wuhan 430079, P.R. China}

\email{lixiangan@ccnu.edu.cn}

\author{Ting-Ting He$^{\ast}$}

\address{School of Mathematics and Statistics and Hubei Key Laboratory of Mathematical Sciences, Central China Normal University, Wuhan 430079, P.R. China}

\email{hetingting0816@163.com}

\thanks{$\ast$Corresponding author.}
\subjclass[2020]{Primary 11B75, 52C22; Secondary 11A63, 28A80}
\keywords{self-affine tiles, tile digit sets, product-form digit sets, periodic tilings}

\begin{abstract}
Two basic problems in the theory of integral self-affine tiles are to characterize the digit sets that generate translational tiles and to understand whether such tiles admit periodic translational tilings. We give a complete solution to the first problem: for an expanding integer matrix $\A$ and a digit set $\D\subset\Z^n$ with $\#\D=|\det\A|$, the self-affine set $T(\A,\D)$ is a tile if and only if, for some $N\geq1$, the iterated digit set $\D_{{\A},N}$ is of skew-product form with respect to $\A^N$. As a consequence, every integral self-affine tile admits a fully periodic self-replicating $q-$fold tiling. Moreover, $q$ divides $|T(\A,\D)|$. In dimension two we prove more: every integral self-affine tile admits a fully periodic tiling. In particular, the periodic tiling conjecture holds for integral self-affine tiles in $\R^2$.
\end{abstract}

\maketitle

\section{Introduction}

\subsection{Self-affine tile} Let $\A\in M_{n}(\Z)$ be an expanding matrix (i.e., all eigenvalues have moduli $> 1$), and let $\D = \{{d}_0,  {d}_1, \cdots {d}_{m-1}\} \subset {\Z}^n$ be a finite set with $\#\D=|\det \A|$.  The {\it affine pair} $(\A, \D )$ defines an iterated function system $\{\varphi_i\}_{i=0}^{m-1}$ with
$\varphi_i (x) = \A^{-1} (x+ {d}_i)$.
Results of Hutchinson \cite{Hut81} then imply that there is a unique nonempty compact set $T := T(\A,\D )\subset {\R}^n$ ({\it self-affine set}) satisfying the set-valued functional equation
$$
\A T = T+\D .
$$
It is well-known that when $T$ has non-empty interior, then $T$ is a translational tile \cite{B91}, that is, there exists a discrete set $\J \subset \R^n $ such that
$$
\sum_{j\in \J}\one_{T}(x-j) =1 \quad \text{ for almost every } x\in  \R ^n,
$$
where $\one_{E}(x)$ denotes the  indicator function of $E$. We call such $T$  an {\it integral self-affine tile}, $\D $  a {\it tile digit set} with respect to $\A$ and $\J$ a {\it tiling set} of $T$. In this case, $\#\D=|\det \A|$ is necessary.

The self-affine tile has its origin in the study of finite automata and recurrent sets \cite{K92-2, Rau82,Thu89}. The development was strongly motivated by wavelet theory, fractal geometry and geometry of numbers \cite{ABBPT05, AB06, B91, BS11, CT16, GY06, HL04, KL1, KL2, LW96-1, LW96-2, LW96-3, LW97, LL, SW}, and the connection with the Fuglede's conjecture on tiles and spectral sets \cite{FHL15, F74, Ko, KoM, T}. Despite the large literature on self-affine tiles, there is a fundamental question that is still widely open:

\medskip
\noindent

\textbf{(Question).} For a given  expanding matrix $\A\in M_{n}(\Z)$, characterize the finite sets $\D \subset \Z^n$ for which  $T(\A, \D)$ is a self-affine tile. 

\medskip

As is known, the most basic tile digit set is a complete residue set modulo $\A\Z^n$ \cite{B91}. Kenyon \cite{K92-2} proved that on ${\R}$,  the converse is also true if $\A  = [p]$ is a prime; it was extended to ${\R}^n$ with a mild assumption that $\D $ spans ${\R}^n$ \cite{HL04} (see also \cite{LW96-1} for another more restricted condition). But complete residue sets are far from the whole story, even in one dimension. A typical example is

\begin{example}
$\D =\{0,1,8,25\}$ is a tile digit set for $\A=[4]$. Although $\D$ is not a complete residue system modulo $4 \Z$, it decomposes as
$$
\D 
=
\bigl(\{0\}+4\{0,2\}\bigr)
\cup
\bigl(\{1\}+4\{0,6\}\bigr),
$$
and
$$
\{0,1\}\oplus\{0,2\}
\equiv
\{0,1\}\oplus\{0,6\}
\equiv
\{0,1,2,3\}\pmod{4 \Z}.
$$
Thus the missing residue classes at the first level are recovered through
different fibers at the next scale.   
\end{example}
This phenomenon led to the introduction of {\it product-form digit sets} by Lagarias and Wang \cite{LW96-1}: 
$$
\D = \E_0 \oplus \A^{\ell_1}\E_1 \oplus \cdots \oplus \A^{\ell_k} \E_k,\quad (\hbox {mod} \ \textbf{A}^{\ell_k+1}\Z^n),
$$
where
$\E = \E_0  \oplus \E_1 \cdots \oplus \E_k$ is a complete residue set modulo $\A\Z^n$.  In \cite{LLR13,LLR17,LR03}, Lai, Lau and Rao further extended the above form to certain {\it modulo product-forms} and {\it higher order modulo product-forms} in $\R$. More recently, Lau and the first-named author \cite{AL19} discovered that the factor appearing at a later scale is allowed to depend on the digit chosen at an earlier scale. Li and Rao \cite{LR25} formulated this dependence in terms of {\it skew-product-form digit sets} (see Definition~\ref{def-spf}) and proved that in $\R$, every tile digit set acquires such a structure after finitely many iterations. A crucial ingredient in their argument is the existence of a tiling set that is both
periodic and self-replicating. However, such periodicity is no longer available in the required generality, and little is known about the structure of tile digit sets in higher dimensions. In this paper, we will give a complete answer to \textbf{(Question)} in arbitrary dimensions. As a consequence,  we will use the structure of the digit set to derive  periodicity and self-replication properties of tiling sets.

\subsection{Tiling set and periodicity}  We call $\J\subset \R^n$ {\it self-replicating} with respect to $(\A, \D)$ if 
$$\J=\A\J+\D,$$ 
and  {\it fully periodic} if it is invariant under translations by some full-rank  discrete subgroup of $\R^n$.  A major conjecture in this area is the periodic tiling conjecture (PTC).

\begin{conjecture}[PTC]
Every bounded measurable translational tile in $\R^n$ must admit a fully periodic tiling set.   
\end{conjecture}

This conjecture is true in dimension one, by the work of Lagarias and Wang \cite{LW96-3}. In sufficiently high dimensions, it was recently disproved by Greenfeld and Tao \cite{GT24}. They constructed a finite set $B\subset\Z^n$ which tiles $\Z^n$ but admits no fully periodic tiling set and this yields an aperiodic translational tile $R_n+B$ in $\R^n$, where $R_n$ is a ``rigid" fundamental domain for $\Z^n$. A translational tile is called \emph{aperiodic} if it admits a translational tiling but admits no fully periodic tiling. Nevertheless, it remains valid in several important structured and low-dimensional settings. For instance, convex polytope in $\R^n$ \cite{McM80, Ven54},  topological disks \cite{BN91,GBN91} and rational polygonal sets in $\R^2$ \cite{DGGM26}.

Self-affine tiles possess an additional dilation structure. Indeed, the functional equation 
$$
\A T=T+\D
$$
iterates to
$$
\A ^kT=T+\D_{\A ,k},
\qquad
\D_{\A ,k}
:=
\D+\A \D+\cdots+\A ^{k-1}\D.
$$
The translational tiling of $T(\A,\D)$ is not arbitrary, but is closely tied to the arithmetic structure of the finite digit expansions. This additional structure is already visible for polyhedral self-affine tiles. The classification of Protasov and Zaitseva \cite{PZ23} implies that self-affine tiles of the form $R_n+B$ must admit a fully periodic translational tiling set.  In one dimension, Kenyon \cite{K92-2}, Lau and Rao \cite{LR03} showed moreover  that self-similar tiles always produce unique periodic and self-replicating tiling $\J$ in $\Z$. That is,  for a tile $T(b, \D)$ with $0\in\D\subset\Z$ and $\gcd(\D)=1$, if $S\subset\Z$ is periodic and self-replicating, then $S=\J$ and it is a tiling  of $T(b, \D)$. This result is convenient to use to construct a periodic tiling. Indeed, if $\D_{b,N}$ is of skew-product form for some $N\geq1$, then the periodic set
$\J=\D_{b,N}+b^N\Z$ is self-replicating. It follows that $T(b,\D)$ tiles $\R$ by translations in $\J$. This is  the result of  Li and Rao \cite{LR25}.
These results provide further evidence that the finite-level arithmetic structure imposed by self-affinity plays a fundamental role in enforcing periodicity of the associated tilings.
In \cite[Conjecture 1.1]{LW96-1}, Lagarias and Wang proposed the following fundamental question.

\begin{conjecture}
Every integral self-affine tile must admit a fully periodic tiling set contained in  $\Z^n$.
\end{conjecture}

However, the situation in higher dimensions is substantially more subtle. It is still  not known in general whether integral self-affine tiles always produce  a set which is both fully periodic and self-replicating.   Lagarias and Wang \cite{LW96-2} already pointed out that these two desirable properties ``fully periodic'' and ``self-replicating'' cannot in general be imposed simultaneously on a tiling set.  The following classical example illustrates the obstruction.

\begin{example}\cite[Example~2.3]{LW96-2}\label{example}
Let $\A=\scalebox{0.7}{$\begin{pmatrix}
2&1\\
0&2
\end{pmatrix}$}$
and
$\D  =\{0,3\}\times\{0,1\}.$
Then $\D $ is a complete residue system modulo
$\A \Z^2$ and $T({\bf A}, \D)$ admits the lattice tiling
$3\Z\times\Z.$ If we assume that $\J\subset\Z^2$ which is fully periodic and satisfies $\J={\bf A}\J+\D$, then $\J=\Z^2$ since $\D $ is a complete residue system modulo
$\A\Z^2$. However, $\Z^2$ is not a tiling set of $T$; instead, it gives a $3$-fold tiling. Consequently, this integral self-affine tile admits no tiling set which is simultaneously fully periodic and self-replicating.
\end{example}

Here a set $\J \subset \R^n $ is called a $q$-fold tiling set of a measurable set $T$ if 
$$
\sum_{j\in \J}\one_{T}(x-j) =q \quad \text{ for almost every } x\in  \R ^n.
$$
Thus, although periodicity and self-replication may be incompatible at the one-fold tilings, they may coexist after allowing a multiplicity tiling. 

Our second main result is that, in arbitrary dimension, every integral self-affine tile  admit a fully periodic and self-replicating $q$-fold tiling set for some integer $q\geq1$. In dimension two, using a recent theorem of Greenfeld and Tao \cite{GT25} on the periodic multi-tiling in $\Z^2$, we can further obtain a fully periodic tiling set. In particular, the periodic tiling conjecture holds for integral self-affine tiles in $\R^2$.

\subsection{Main results} 
Before stating our main results, we introduce some notation.  For $x\in\R,$ write $e(x)=e^{2\pi i x}$. For $x=(x_1, \cdots, x_n)\in \R^n$, we use the same notation 
$$e(x)=(e(x_1), \cdots, e(x_n))\in (\mathbb C^\times)^n.$$  
For a finite set $E\subset\Z^n$, define its Laurent polynomial by
$$
P_E(z)=\sum_{d\in E}z^d,\quad z^d=z_1^{d_1}\cdots z_n^{d_n}.
$$
For $k\geq0$, set
$$
e_k(x):=e\bigl((\A^T)^{-k}x\bigr),
$$
where $e_0(x)=e(x)$. Finally, for $k\geq1$ define
$$
\mathcal Z_k(\A,\D )=\left\{v\in\Z^n: P_\D \left(e_k(v)\right)=0\right\}.
$$
Kenyon's criterion provides a Fourier characterization of tiles. In its primitive form it states  (see Corollary \ref{coro-Kenyon}) that
\begin{equation*}
T(\A,\D )  \text{ is a tile}
\quad\Longleftrightarrow\quad
\Z^n\setminus \A^T\Z^n
\subset
\bigcup_{k=1}^{\infty}\mathcal Z_k(\A,\D ).
\end{equation*}
We sharpen this criterion to a finite version, which is the central analytical tool for characterizing tile digit sets.

\begin{theorem}[Finite zero-covering theorem]\label{fzp-0}
Let $\A\in M_n(\Z)$ be expanding, $\D \subset\Z^n$ with
$\#\D =|\det \A|$. The self-affine set $T(\A,\D )$ is a tile if and only if there is an $N\geq1$ such that 
\begin{equation}\label{eq-Z}
\Z^n\setminus \A^T\Z^n
\subset
\bigcup_{k=1}^{N}\mathcal Z_k(\A,\D ).
\end{equation}
\end{theorem}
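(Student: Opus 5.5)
The implication from \eqref{eq-Z} to the tile property is immediate. A finite union $\bigcup_{k=1}^N\mathcal Z_k(\A,\D)$ is contained in $\bigcup_{k\ge1}\mathcal Z_k(\A,\D)$, so Corollary~\ref{coro-Kenyon} applies. All the work is in the converse: assuming $T(\A,\D)$ is a tile, I must extract a finite $N$ from the infinite covering.

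The plan has two ingredients, periodicity and compactness. First I observe that $\mathcal Z_k(\A,\D)$ is invariant under translation by $(\A^T)^k\Z^n$. Indeed, if $v'=v+(\A^T)^k w$, then $(\A^T)^{-k}v'=(\A^T)^{-k}v+w$, so $e_k(v')=e_k(v)$ coordinatewise. The same holds for $\Z^n\setminus\A^T\Z^n$. Hence the uncovered set
\[
U_N:=\bigl(\Z^n\setminus\A^T\Z^n\bigr)\setminus\bigcup_{k=1}^N\mathcal Z_k(\A,\D)
\]
is a union of cosets of $(\A^T)^N\Z^n$. Moreover $U_{N+1}\subset U_N$, and Kenyon's criterion gives $\bigcap_N U_N=\emptyset$.

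Suppose, for contradiction, that every $U_N$ is nonempty. I pass to the profinite completion $\widehat{\Z}_{\A}=\varprojlim \Z^n/(\A^T)^k\Z^n$, which is compact because $|\det\A|\ge2$ and the quotients are finite. The closures $\overline{U_N}$ are nonempty, closed, and nested, since each is a finite union of cosets of an open subgroup. By compactness they share a point $\xi$. Write $\xi_k\in\Z^n$ for a representative of $\xi$ modulo $(\A^T)^k\Z^n$. Then $\xi_k\notin\A^T\Z^n$ for $k\ge1$, and $P_\D(e_j(\xi_k))\neq0$ for all $j\le k$. Equivalently, the coherent family of torsion points $t_j:=e_j(\xi)\in(\mathbb C^\times)^n$, which is well defined because $e_j$ depends only on $\xi$ mod $(\A^T)^j$, avoids the zero variety $V(P_\D)$ for every $j\ge1$. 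These points satisfy $t_j=(t_{j+1})^{\A^T}$ in multiplicative notation.

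The main obstacle is turning this non-integral ``$\A$-adic'' uncovered point into a genuine integer vector uncovered at every level, which would contradict Corollary~\ref{coro-Kenyon}. My first attempt would use the structure of torsion points on $V(P_\D)$. By Laurent's theorem (Manin--Mumford for tori), $V(P_\D)$ contains only finitely many maximal torsion cosets $\zeta_iH_i$, and every torsion zero lies in one of them. Kenyon's covering says that for each integer $v\notin\A^T\Z^n$ some $e_k(v)$ lies in $\bigcup_i\zeta_iH_i$. I would show that the set of $v$ covered by a fixed coset $\zeta_iH_i$ at some level is itself, in the $\A$-adic topology, a finite union of cosets of subgroups $\Gamma_i$ defined by characters of $H_i$. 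The pullbacks of $H_i$ under powers of $\A^T$ are finitely many up to the action, because $\A^T$ permutes subtori of bounded ``degree'' eventually periodically. It follows that the covered set is closed in $\widehat{\Z}_{\A}$ and, since it contains the dense set $\Z^n\setminus\A^T\Z^n$, equals its closure, which contains $\xi$. That contradicts the choice of $\xi$. If this closedness argument proves delicate, the fallback is an alternative route through measure: use $|T|\in\Z$ and the fact that $\#(\D_{\A,k}\bmod$ a suitable lattice$)$ stabilizes, with a quantitative bound on the level at which zeros first appear.
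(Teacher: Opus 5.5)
Your easy direction is fine, and so is the profinite compactness step. But that step only repackages the difficulty. The covered set $\bigcup_k\{\xi: P_\D(e_k(\xi))=0\}$ is a countable union of clopen subsets of $\widehat{\Z}_{\A}$, so saying that it contains the compact set $\widehat{\Z}_{\A}\setminus\A^T\widehat{\Z}_{\A}$ is \emph{equivalent} to the finite covering you want to prove.

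The step meant to close this gap fails. The set covered by a single torsion coset is in general not a finite union of cosets of closed subgroups, and it need not be closed. Take $\A=2I_2$ and $\D=\{0,1\}^2$, so $T=[0,1]^2$, $P_\D=(1+z_1)(1+z_2)$ and $\widehat{\Z}_{\A}=\mathbb Z_2^2$. For the maximal torsion coset $\{z_1=-1\}$, the point $e_k(\xi)$ lies on it if and only if $\nu_2(\xi_1)=k-1$. Hence the set it covers at some level is $\{\xi\in\mathbb Z_2^2:\xi_1\neq 0\}$, which is open and dense but not closed.

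Your claim that $\A^T$ acts on subtori ``eventually periodically'' is also false once $\A$ is not scalar. Pushing forward along $\phi^k$, or taking the identity component of the pullback along it, carries the subtorus with Lie algebra $W$ to the one with Lie algebra $(\A^T)^{\pm k}W$. For $\A^T=\begin{pmatrix}3&1\\1&2\end{pmatrix}$ (expanding, with irrational eigendirections) and $W=\R(1,0)^T$, these rational lines are pairwise distinct. So the core of the argument is missing, and the measure-theoretic fallback is too vague to evaluate.

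The underlying problem is that you use only Kenyon's criterion, that is, the \emph{non}-vanishing of $P_\D$ at uncovered points. That is an open condition, which neither Noetherian nor compactness arguments can exploit. The paper's missing idea is to use the tiling structure itself:
\begin{itemize}
\item By Lagarias--Wang, $T$ has a tiling set $\J\subset\Z^n$.
\item For a positive-measure set of $x$, the integer fibers $F=\{z\in\Z^n: x+z\in T\}$ satisfy: $F\oplus\D_{\A,k}$ is an $r$-fold complete residue system modulo $\A^k\Z^n$ for every $k$.
\item Consequently, any $v\in\Z^n\setminus\A^T\Z^n$ uncovered through level $k$ satisfies $P_F(e_k(v))=0$. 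This is a \emph{closed} condition.
\item The chain $V_k=\bigcap_{j<k}\phi^{-j}(Z(P_F))$ then stabilizes at some $N$, and $V_N$ is forward $\phi$-invariant.
\item An uncovered $v$ would give $\mathbf 1=\phi^N(e_N(v))\in V_N\subset Z(P_F)$, contradicting $P_F(\mathbf 1)=\#F\neq 0$.
\end{itemize}
With this conversion of non-vanishing of $P_\D$ into vanishing of $P_F$, neither the profinite completion nor Manin--Mumford is needed.
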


We next formulate the structural consequence. 
If a family of sets is pairwise disjoint, we use \(\bigsqcup\) to denote their union.

\begin{definition}[Skew-product-form digit set {\cite{LR25}}]\label{def-spf}
Let $\A$ be an $n\times n$ expanding integer matrix, and let $\D \subset\Z^n$ satisfy $\#\D =|\det \A|. $
We say that $\D $ is a {\it skew-product-form digit set} with respect to $\A$ if there exist $\mathcal W=\{w_j\}_{j=0}^{s-1}\subset\Z^n $ and finite sets $\mathcal E_0,\ldots,\mathcal E_{s-1}\subset\Z^n$
such that
\begin{equation*}
\D =\bigsqcup_{j=0}^{s-1}\left(
\{w_j\}+\A\mathcal E_j
\right),
\end{equation*}
and
$\mathcal W\oplus\mathcal E_j$ is a complete residue set modulo $\A\Z^n$ for every $j=0,\ldots,s-1$. 
\end{definition}

When each $\E_j$ consists of a single element, then $\D$ itself is a complete residue set modulo $\A\Z^n$. More examples and discussions may be found in \cite{AL19} \cite{LR25}. Recall that the iterated digit  associated to $(\A, \D)$:
$$
\D_{\A, k}=\D +\A\D +\cdots+\A^{k-1}\D .
$$
Then $$T(\A, \D)=T(\A^k, \D_{\A, k}),$$ 
so that passing to a finite iteration does not change the underlying tile. Our first main theorem shows that the tile digit set is of skew-product form in finite level iterations.

\begin{theorem}\label{main2}  Let $\A\in M_n(\Z)$ be expanding, $\D \subset\Z^n$ with
$\#\D =|\det \A|$. Then $T(\A,\D )$ is a self-affine tile if and only if  $\D _{\A,N}$ is a skew-product-form digit set with respect to $\A^N$ for an $N\geq1$ satisfies Theorem \ref{fzp-0}. 
\end{theorem}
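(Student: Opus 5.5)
We prove the two implications separately. The easy direction will use the classical fact that a skew-product-form digit set produces a tile. The hard direction will convert the finite zero-covering of Theorem~\ref{fzp-0} into an explicit skew-product decomposition of $\D_{\A,N}$.

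\emph{Sufficiency.} Suppose $\D_{\A,N}=\bigsqcup_j(\{w_j\}+\A^N\E_j)$ with $\mathcal W\oplus\E_j$ a complete residue set modulo $\A^N\Z^n$. Put $\mathbf B=\A^N$ and $\D'=\D_{\A,N}$, so that $T(\A,\D)=T(\mathbf B,\D')$. By Corollary~\ref{coro-Kenyon} applied to $(\mathbf B,\D')$, it suffices to show that every $v\in\Z^n\setminus \mathbf B^T\Z^n$ is a zero of $P_{\D'}(e_k(v))$ for some $k$, with $e_k$ now defined using $\mathbf B$. We write
\[
P_{\D'}(z)=\sum_j z^{w_j}P_{\E_j}(z^{\mathbf B}).
\]
Evaluating at $k=1$ and $k=2$, we use that $\mathcal W\oplus\E_j$ is a complete residue set. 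By the standard character-sum identity, this means $P_{\mathcal W}(e_1(v))P_{\E_j}(e_1(v))=0$ for all $v\notin \mathbf B^T\Z^n$. If $P_{\mathcal W}(e_1(v))=0$, then the level-one sum vanishes, since $z^{\mathbf B}$ at $e_1(v)$ is $e(v)$ times an integer power, which equals $1$. Otherwise $P_{\E_j}(e_1(v))=0$ for all $j$, and then $P_{\D'}(e_2(v))=\sum_j e_2(v)^{w_j}P_{\E_j}(e_1(v))=0$. Hence $v\in\mathcal Z_1\cup\mathcal Z_2$, and $T$ is a tile. Equivalently, one can build the self-replicating set $\mathcal W+\mathbf B\Z^n$-type tiling directly; we stay with the Fourier route.

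\emph{Necessity.} Assume $T$ is a tile and let $N$ be as in Theorem~\ref{fzp-0}. We take $\mathbf B=\A^N$ and $\D'=\D_{\A,N}=\D+\A\D_{\A,N-1}$, and factor
\[
P_{\D'}(e_k(v))=\prod_{i=0}^{N-1}P_\D\bigl(e_{k-i}(v)\bigr).
\]
Then (\ref{eq-Z}) translates into the statement that every nonzero class of $\Z^n/\mathbf B^T\Z^n$ is a zero of $P_{\D'}$ at level one (in $\mathbf B$-scaling) or at level two. Finiteness of $N$ is exactly what makes the levels collapse to two after rescaling. To see this, take $v\notin \A^T\Z^n$ and write $v=(\A^T)^m u$ with $m<N$; the zero of $P_\D$ at some level $k\le N$ then sits among the first $2N$ factors.

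Next we decompose $\D'$ modulo $\mathbf B\Z^n$. Let $\mathcal W$ be a set of representatives of the residues that occur, and let $\E_j=\{e:\ w_j+\mathbf Be\in\D'\}$. The fiber polynomials $P_{\E_j}$ must then satisfy two conditions:
\begin{itemize}
\item the function $v\mapsto P_{\mathcal W}(e_1(v))$ has zero set containing the complement of the annihilator of the subgroup generated by the differences of $\mathcal W$;
\item on the remaining classes, each $P_{\E_j}(e_1(v))$ vanishes.
\end{itemize}
Counting then gives $\#\mathcal W\cdot\#\E_j=|\det\mathbf B|$ and distinct residues for $\mathcal W+\E_j$, which is the complete residue property.

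\emph{Main obstacle.} The key difficulty is showing that the level-two zeros force \emph{every} fiber $P_{\E_j}$ to vanish individually, not merely the sum $\sum_j z^{w_j}P_{\E_j}$. To handle this, we first exploit that all multiplicities of residues in $\mathcal W$ are equal, which follows from the level-one vanishing via Fourier inversion on $\Z^n/\mathbf B\Z^n$. We then vary $v$ within a coset $v+\mathbf B^T\Z^n$: this leaves $e_1$ fixed and twists the $e_2$-phases $e_2(v)^{w_j}$ independently. Fourier inversion over these twists isolates each fiber.
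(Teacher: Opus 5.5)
Your sufficiency argument is correct, and it takes a different, self-contained route from the paper, which only cites Li--Rao. You expand $P_{\D'}(z)=\sum_j z^{w_j}P_{\E_j}(\phi_{\mathbf B^T}(z))$ and use $P_{\W}(e_1(v))P_{\E_j}(e_1(v))=0$. This puts every $v\notin\mathbf B^T\Z^n$ in $\mathcal Z_1(\mathbf B,\D')\cup\mathcal Z_2(\mathbf B,\D')$, and Kenyon's criterion finishes. You should say explicitly that $s\,\#\E_j=|\det\mathbf B|$ makes all $\#\E_j$ equal; that is what turns the level-one value $\sum_j\#\E_j\,e_1(v)^{w_j}$ into a multiple of $P_{\W}(e_1(v))$. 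Also drop the aside that $\W+\mathbf B\Z^n$ tiles directly: by the Lagarias--Wang example in the paper it may only be a multiple tiling.

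In the necessity direction, your reduction to ``level one or level two'' for $(\mathbf B,\D')$ and the twisting over $v+\mathbf B^T\Z^n$ are exactly the paper's Step~1. However, the way you assemble them fails at two points.

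First, your first bullet is false. It asserts that the Fourier transform of $\one_{\W}$ vanishes off the annihilator of $\langle\W-\W\rangle$, which would force $\W$ to be a single coset of a subgroup modulo $\mathbf B\Z^n$. A counterexample: take $\D=\{0,1,8,25\}$, $\A=[4]$, $N=2$. This $N$ does satisfy the finite covering, since $v\equiv 2 \pmod 4$ lies in $\mathcal Z_1$ and every odd $v$ lies in $\mathcal Z_2$. The residues of $\D_{4,2}$ modulo $16$ are $\W=\{0,1,4,5,8,9,12,13\}$, which generate $\Z/16\Z$, yet $P_{\W}(e(4/16))=4+4i\neq 0$.

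Second, equality of the multiplicities $\#\E_j$ cannot come ``from the level-one vanishing via Fourier inversion''. Level one vanishes only on some nontrivial classes (if it vanished on all, $\D'$ would be a complete residue system), so inversion gives nothing. In the paper, $s\,\#\E_j=m^N$ is the output of the final count, not an input. Consequently your ``remaining classes'' are the wrong set, and the closing ``counting'' has no valid input.

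The repair uses only ingredients you already have.
\begin{itemize}
\item Work with the weighted value $P_{\D'}(e_1(v))=\sum_j\#\E_j\,e_1(v)^{w_j}$ rather than $P_{\W}$.
\item Suppose $P_{\D'}(e_1(v))\neq 0$. Since $e_1$ is constant on $v+\mathbf B^T\Z^n$, level one fails on the whole coset, so level two holds at every $v+\mathbf B^T a$. This is exactly what your twisting needs.
\item Linear independence of the distinct characters $a\mapsto e_1(a)^{w_j}$ then gives $P_{\E_j}(e_1(v))=0$ for every $j$.
\item Hence $P_{\E_j}(e_1(v))\,P_{\D'}(e_1(v))=0$ for all $v\notin\mathbf B^T\Z^n$. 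By Proposition~\ref{prop-basic}(ii), $\E_j\oplus\D'$ is equidistributed modulo $\mathbf B\Z^n$.
\item So $\E_j+\W$ meets every residue class, giving $s\,\#\E_j\ge m^N$ for each $j$. Since $\sum_j\#\E_j=\#\D'=m^N$, equality holds throughout, which yields the complete-residue property.
\end{itemize}
The paper avoids this case split by applying the twist to $P_{\D_{\A,2N}}(e_{2N}(\cdot))$, whose coefficients are exactly $P_{\E_j\oplus\D_{\A,N}}(e_N(v))$. You should also state that $\D_{\A,N}$ and $\D_{\A,2N}$ are direct sums (Lagarias--Wang, since $T$ has positive measure). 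Your product factorization, $\#\D'=m^N$, and the directness of $\E_j\oplus\D'$ all rely on this.
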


It is worth emphasizing that the finite-level structure is stable once it appears. Indeed, if \(N_0\) satisfies the finite zero-covering condition (1.1), then so does $N$ with $N\geq N_0.$ Consequently, \(\D_{\A,N}\) is of skew-product form with respect to $\A^N$ for every $N\geq N_0$.

For the one-dimensional case, the lower bound  $N_0$ can be described explicitly in terms of the diameter of  $\D$. More precisely, for any $x\in\R$, we use $\lfloor x\rfloor$ to denote the greatest integer less than or equal to $x$. One may take 
$$N_0= 1+{\max}_{d\in(\D-\D)\setminus\{0\}}\left\lfloor \log_2 d\right\rfloor.$$ 
Actually, the lower bound \(N_0\) is optimal. For instance, $\D=\{0, 1\}$ a complete residue set modulo $2$, in which  $N_0=1$.
Since the proof of this refinement requires a separate argument based on cyclotomic polynomial theory, we postpone it, together with several related remarks, to a later section.

The finite-level
skew-product structure also produces an $\A ^N\Z^n$-periodic and self-replicating set $\J_R\subset\Z^n$, that is  
$$\J_R=\W+{\bf A}^N\Z^n=\D+\A\J_R.$$
Combining this equivalence with the integer-fiber decomposition yields the following result.

\begin{theorem}\label{multi-tiling}
If $T(\A,\D)$ is a self-affine tile, then it admits a fully periodic self-replicating $q$-fold tiling set $\J_R$ in $\Z^n$. Moreover, $q$ divides $|T(\A,\D)|$.
\end{theorem}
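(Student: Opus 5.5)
The plan is to build $\J_R$ directly from the skew-product structure given by Theorem~\ref{main2}. We show that its tiling multiplicity function is invariant under a dilation, hence constant. Divisibility then follows by decomposing $\Z^n$ into translates of $\J_R$.

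\textbf{Step 1 (construction).} Since $T(\A,\D)$ is a tile, Theorem~\ref{main2} gives an $N\ge1$ such that $\D_N:=\D_{\A,N}$ is a skew-product-form digit set with respect to $\mathbf B:=\A^N$. Thus
$$\D_N=\bigsqcup_{j=0}^{s-1}\bigl(\{w_j\}+\mathbf B\E_j\bigr),$$
where $\W\oplus\E_j$ is a complete residue set modulo $\mathbf B\Z^n$ for every $j$. Put $\J_R:=\W+\mathbf B\Z^n\subset\Z^n$, which is fully periodic. Because $\W\oplus\E_0$ is a direct sum forming a complete residue system, the $w_j$ are pairwise incongruent modulo $\mathbf B$. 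Hence $\J_R$ is a disjoint union of $s$ cosets.

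\textbf{Step 2 (self-replication with multiplicity one).} We check that the multiset $\{d+\mathbf B j: d\in\D_N,\ j\in\J_R\}$ equals $\J_R$, with each point counted exactly once. Indeed, for fixed $i$ the part coming from $d\in w_i+\mathbf B\E_i$ is
$$w_i+\mathbf B\bigl(\E_i+\W+\mathbf B\Z^n\bigr)=w_i+\mathbf B\Z^n,$$
and each point is hit once because $\E_i\oplus\W$ is a complete residue system. Different $i$ give disjoint cosets. Therefore $\J_R=\D_N+\mathbf B\J_R$, and the representation is unique. Since $T=T(\mathbf B,\D_N)$, this is the self-replicating property at level $N$.

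\textbf{Step 3 ($\J_R$ is a $q$-fold tiling).} Let $f(x)=\sum_{j\in\J_R}\one_T(x-j)$. This function is locally integrable and $\mathbf B\Z^n$-periodic. Because $T$ is a tile, the union $\mathbf BT=T+\D_N$ is measure-disjoint, so $\one_{\mathbf BT}(y)=\sum_{d\in\D_N}\one_T(y-d)$ a.e. Using Step 2,
$$f(\mathbf B^{-1}y)=\sum_{j\in\J_R}\one_{\mathbf BT}(y-\mathbf Bj)=\sum_{j\in\J_R}\sum_{d\in\D_N}\one_T(y-d-\mathbf Bj)=f(y)\quad\text{a.e.},$$
that is, $f(x)=f(\mathbf Bx)$. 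Then $f(x+m)=f(\mathbf Bx+\mathbf Bm)=f(\mathbf Bx)=f(x)$ for $m\in\Z^n$, so $f$ is $\Z^n$-periodic. Iterating gives invariance under $\mathbf B^{-k}\Z^n$ for all $k$, and $\bigcup_k\mathbf B^{-k}\Z^n$ is dense because $\mathbf B$ is expanding. By continuity of translation in $L^1_{loc}$, an $L^1_{loc}$ function invariant under a dense group of translations is a.e.\ constant. So $f=q$ a.e. Since $f$ is integer-valued and $|T|>0$, $q$ is a positive integer.

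\textbf{Step 4 (divisibility).} Since $\E_0\oplus\W+\mathbf B\Z^n=\Z^n$ with uniqueness, we get the disjoint decomposition
$$\Z^n=\bigsqcup_{e\in\E_0}\bigl(\mathbf Be+\J_R\bigr).$$
Each translate $\mathbf Be+\J_R$ is again a $q$-fold tiling set. Summing over $e\in\E_0$ shows $\sum_{m\in\Z^n}\one_T(x-m)=q\,\#\E_0$ a.e. Integrating this $\Z^n$-periodic function over $[0,1)^n$ gives $|T|=q\,\#\E_0$, so $q$ divides $|T(\A,\D)|$.

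\textbf{Main obstacle.} The delicate point is Step 2: we need the multiset identity with multiplicity exactly one, not just a set equality, because the dilation invariance in Step 3 relies on it. This is exactly where the direct-sum and complete-residue conditions of Definition~\ref{def-spf} are used. A secondary issue is that self-replication is obtained for the pair $(\A^N,\D_{\A,N})$ rather than for $(\A,\D)$. This is harmless for the tiling statement, since $T(\A,\D)=T(\A^N,\D_{\A,N})$.
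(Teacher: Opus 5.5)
Your route to the $q$-fold tiling differs from the paper's and is sound, but two points need repair.

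\textbf{Repairs.} First, the partition in Step 4 is wrong as written. Since $\J_R$ is $\mathbf B\Z^n$-periodic, $\mathbf Be+\J_R=\J_R$ for every $e\in\E_0$, so $\bigcup_{e\in\E_0}(\mathbf Be+\J_R)$ is just $\J_R$. The complete residue property of $\E_0\oplus\W$ modulo $\mathbf B\Z^n$ actually gives $\Z^n=\bigsqcup_{e\in\E_0}(e+\J_R)$. Summing the $q$-fold tilings by the translates $e+\J_R$ then gives $\sum_{m\in\Z^n}\one_T(x-m)=q\,\#\E_0$ a.e., hence $|T|=q\,\#\E_0$.

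Second, the theorem asserts that $\J_R$ is self-replicating, which in the paper means $\J_R=\A\J_R+\D$. Your Step 2 only yields $\J_R=\D_{\A,N}+\mathbf B\J_R$. Since self-replication is part of the conclusion, you cannot set the difference aside as harmless. For your $\J_R$ it follows quickly. Every $\E_j$ is nonempty, so $\J_R=\D_{\A,N}+\A^N\Z^n$, and hence
\[
\A\J_R+\D=\D_{\A,N+1}+\A^{N+1}\Z^n\subset\D_{\A,N}+\A^N\Z^n=\J_R .
\]
Conversely, by Step 2 and $2N-1\ge N$,
\[
\J_R=\D_{\A,N}+\A^N\J_R=\D_{\A,2N}+\A^{2N}\Z^n=\D+\A\bigl(\D_{\A,2N-1}+\A^{2N-1}\Z^n\bigr)\subset\D+\A\J_R .
\]
This is the argument of the paper's Theorem~\ref{thm:self-replication}, (i)$\Rightarrow$(ii), which you could simply cite.

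\textbf{Comparison with the paper.} The paper obtains the multi-tiling fiber by fiber. It takes the integer fibers $F_\ell$ of Lemma~\ref{lem-T}, with $\#F_\ell=|T|=r$ (Lemma~\ref{lem:fiber-cardinality}). It then uses Theorem~\ref{th-com}: $F_\ell\oplus\D_{\A,N}$ is an $r$-fold complete residue set modulo $\A^N\Z^n$. Since $\D_{\A,N}$ has counting function $h\one_{\W}$ on $\Z^n_{\A^N}$ with $h=\#\E_j$, this gives $h\,(f_{F_\ell}\ast\one_\W)=r$. So $q=r/h$ is an integer, each $F_\ell$ is a $q$-fold tile of $\Z^n$ with tiling set $\J_R$, and divisibility is immediate.

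You never look at fibers. The multiplicity-one identity $\J_R=\D_{\A,N}\oplus\mathbf B\J_R$, together with measure-disjointness of $T+\D_{\A,N}$, makes the multiplicity function invariant under $x\mapsto\mathbf Bx$. It is therefore constant by density of $\bigcup_k\mathbf B^{-k}\Z^n$. The partition of $\Z^n$ into $\#\E_0$ translates of $\J_R$ then recovers the same relation $|T|=qh$.

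What each route buys:
\begin{itemize}
\item Your Steps 3--4 need only $|T|>0$ and the skew-product form, not Theorem~\ref{th-com} or an integer tiling set. They prove a more general principle: a periodic set satisfying the self-replication identity with multiplicity one is a multi-tiling set. Within the paper this saves nothing, since Theorem~\ref{main2} already rests on that machinery.
\item The paper's argument is a finite counting identity on $\Z^n_{\A^N}$, reusing what was built for Theorem~\ref{fzp-0}. It also gives the finer statement that every integer fiber $F_\ell$ is a $q$-fold tile of $\Z^n$ with the common tiling set $\J_R$.
\end{itemize}
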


Thus, an integral self-affine tile always produces a multiple tiling set which is both fully periodic and self-replicating. In $\R^2$, the multiple tiling can be replaced by a $1$-fold fully periodic tiling. The key additional input is the periodicity theorem of Greenfeld and Tao \cite{GT25} for indicator-function solutions
of convolution equations on $\Z^2$. Applying that theorem simultaneously to the finitely many fiber
tilings gives the following.

\begin{theorem}\label{thm:planar-ptc}
If $T(\A,\D)$ is a self-affine tile in $\R^2$,  then it admits  a fully periodic tiling set $\J_P\subset\Z^2$. 
\end{theorem}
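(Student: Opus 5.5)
We will prove Theorem~\ref{thm:planar-ptc} by reducing the tiling problem for $T=T(\A,\D)$ to finitely many tiling problems for a finite set in $\Z^2$, one for each coset of $\Z^2$ in a suitable way, and then invoking the Greenfeld--Tao periodicity theorem \cite{GT25}. The first step is to write $T$ as a union of translates of a fundamental domain. By Theorem~\ref{main2}, for large $N$ the set $\D_{\A,N}$ has skew-product form with respect to $\A^N$. By Theorem~\ref{multi-tiling}, $T$ admits a $q$-fold tiling by the lattice-periodic set $\J_R=\W+\A^N\Z^2\subset\Z^2$. Multi-tiling by a subset of $\Z^2$ gives the integer-fiber decomposition: the function $f(x)=\sum_{k\in\Z^2}\one_T(x-k)$ is $\Z^2$-periodic. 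Since $T$ tiles by a subset of $\Z^2$, we can instead use the standard fact that an integral self-affine tile tiles with a tiling set in $\Z^2$ (the Lagarias--Wang result that $\D\subset\Z^n$ tiles implies a tiling set inside $\Z^n$, after the normalization $\Z[\A,\D]=\Z^2$).

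Fix $Q=[0,1)^2$ and, for each $y\in Q$, define the fiber
\[
B_y=\{k\in\Z^2:\ y+k\in T\}.
\]
Then a set $\J\subset\Z^2$ tiles $T$ if and only if $B_y\oplus\J=\Z^2$ for almost every $y\in Q$. The fibers are finite subsets of $\Z^2$, uniformly bounded because $T$ is compact. Hence only finitely many distinct sets $B^{(1)},\dots,B^{(m)}$ occur on positive-measure sets of $y$. Tiling of $T$ by $\J\subset\Z^2$ is therefore equivalent to the simultaneous tiling equations
\[
\one_{B^{(i)}}*\one_{\J}=1 \quad\text{on } \Z^2,\qquad i=1,\dots,m.
\]
The existence of some tiling set $\J\subset\Z^2$ shows that this system has a solution that is an indicator function.

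The main step is to apply \cite{GT25}. Greenfeld and Tao prove that in $\Z^2$ any system of tiling equations $\one_{F_i}*\one_A=c_i$ with finite $F_i$ that admits an indicator solution $\one_A$ also admits a periodic one. Applying this to the system above yields a fully periodic $\J_P\subset\Z^2$ satisfying all the fiber equations, and hence $T+\J_P$ is a tiling. This is the step I expect to be the main obstacle. I must check that the Greenfeld--Tao theorem is available for several simultaneous equations rather than a single tile. If it is stated only for one equation, I would encode the $m$ equations as a single tiling equation in $\Z^2\times$(finite group), or in $\Z^2$ after a dilation, using the known equivalence between multiple simultaneous tilings and tilings in $\Z^2\times G$. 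I would then confirm that their periodicity result in dimension two covers that setting.

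Finally, if the tiling set obtained earlier is only a $q$-fold periodic one ($\J_R$), it serves as a consistency check and is not needed. Existence of a genuine one-fold solution in $\Z^2$ is what feeds \cite{GT25}. Full periodicity transfers directly, because the periods of $\J_P$ form a full-rank subgroup of $\Z^2$.
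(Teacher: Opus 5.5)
Your architecture matches the paper's: a tiling set $\J\subset\Z^2$ from Lagarias--Wang, reduction to the finitely many positive-measure fibers $F_1,\dots,F_t$ with $\one_{F_\ell}*\one_{\J}=1$ for every $\ell$, then Greenfeld--Tao. The detour through Theorems~\ref{main2} and \ref{multi-tiling}, and the normalization $\Z[\A,\D]=\Z^2$, are unnecessary. However, the step you flag as the main obstacle is where the proof actually lives, and you do not close it.

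The periodicity theorem available to cite (Theorem~\ref{thm:GT-periodicity}) concerns a \emph{single} convolution equation $f*a=c$. You assert a version for systems with a common unknown $\J$, but give no justification and admit you have not checked it. Your first fallback fails. Greenfeld and Tao's counterexample to the periodic tiling conjecture lives precisely in a group $\Z^2\times G_0$ with $G_0$ finite abelian. Encoding the fiber system as a tiling of $\Z^2\times G$ therefore leads into a setting where periodicity is false, and no two-dimensional periodicity result covers it. Your second fallback, ``in $\Z^2$ after a dilation,'' is not a known reduction. You give no way to recover the individual equations from a tiling set of the combined tile, and such a tiling set need not be a dilate.

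The missing idea is that Theorem~\ref{thm:GT-periodicity} allows an arbitrary finitely supported \emph{integer-valued} $f$, not only an indicator. Take $\gamma>\max_\ell \#F_\ell$ and set $f=\sum_{\ell=1}^t\gamma^{\ell-1}\one_{F_\ell}$. Then $f*\one_{\J}=c:=\sum_{\ell}\gamma^{\ell-1}$, so the theorem gives a fully periodic $\J_P$ with $f*\one_{\J_P}=c$. Put $n_\ell(z)=(\one_{F_\ell}*\one_{\J_P})(z)$. Then $0\le n_\ell(z)\le\#F_\ell<\gamma$ and $\sum_\ell\gamma^{\ell-1}n_\ell(z)=\sum_\ell\gamma^{\ell-1}$. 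Uniqueness of base-$\gamma$ digits forces $n_\ell(z)=1$ for every $\ell$ and $z$. Once all fiber equations hold for $\J_P$, your last step finishes as in the paper: almost every $x\in[0,1)^2$ has one of the positive-measure fibers, so $\sum_{j\in\J_P}\one_T(x+z-j)=1$ for all $z\in\Z^2$.
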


The last two theorems emphasize a distinction that is important in higher-dimensional self-affine tiling theory. Theorem \ref{multi-tiling} preserves the self-replication identity in every dimension, but may produce a multiple tiling. Theorem \ref{thm:planar-ptc} gives a $1$-fold fully periodic tiling in the plane, but the resulting tiling set need not be self-replicating.

The rest of the paper is organized as follows. Section~\ref{sec:preliminaries} collects the required facts from Fourier analysis on finite abelian groups, Kenyon's criterion and the Noetherian geometry of the algebraic torus. Section~\ref{sec:finite-zero-covering} proves the finite zero-covering theorem. Section~\ref{sec:skew-product} establishes the finite-level skew-product characterization and the existence of fully periodic self-replicating multiple tilings. Section~\ref{sec:planar} proves the periodic tiling theorem in dimension two. Finally, Section~\ref{sec:examples} presents remarks  and  examples. 

\subsection{Disclosure of AI use}

At an early stage of this work, we had already proved that, assuming the finite zero-covering  condition, the tile digit set acquires a skew-product form after finitely many iterations  (Theorem \ref{main2}), and that this further gives rise to a fully periodic and self-replicating multiple tiling set (Theorem \ref{multi-tiling}). In the one-dimensional case, we were able to establish the finite zero-covering  condition by using the theory of cyclotomic polynomials (Theorem \ref{cyc-poly}). This naturally led us to ask ChatGPT-5.6 Sol whether there is a higher-dimensional analogue of the role played by cyclotomic polynomials in this argument. It pointed us to the Zariski topology and the Noetherian property of the algebraic torus (see subsection \ref{section:Noetherian}). With this encouraging outcome, we continued to use AI to explore possible refinements. After further exploration, this led us to prove Theorem \ref{fzp-0}. The final proof was written by us and we take full responsibility for all mathematical claims, proofs, and the final content of the paper.

\subsection{Acknowledgment} The authors are grateful to Professor Chun-Kit Lai from San Francisco State University for bringing the periodic tiling conjecture (PTC) to our attention, and Professor Xing-Gang He from Central China Normal University for many helpful discussions during the preparation of this manuscript. The work was supported by the NSFC grant 12171181.

\section{Preliminaries}\label{sec:preliminaries}

In this section, we collect several basic facts that will be used in
the sequel. Throughout the paper, let $\A\in M_n(\Z)$ be an expanding integer matrix and $\D\subset\Z^n$ with $\#\D=|\det \A|=:m$.

\subsection{Fourier analysis on finite abelian groups} Let $ \Z^n_{\A} :=\Z^n/\A\Z^n$. Then $\# \Z^n_{\A} =m$, and its dual group is naturally identified
with
$$
\widehat{ \Z^n_{\A} }
\simeq
\Z^n/\A^T\Z^n.
$$  More precisely, for $v\in\Z^n$, the corresponding character is
$$\chi_{v}([x])=e(x\cdot(\A^T)^{-1}v),\quad\quad [x]\in  \Z^n_{\A} .$$
Two vectors $v,v'\in\Z^n$ determine the same character if and
only if
$$
v\equiv v'\pmod{\A^T\Z^n},
$$
and $\chi_v$ is nontrivial precisely when $v\notin \A^T\Z^n.$
For every finite set $E\subset\Z^n$,
$$
\sum_{x\in E}\chi_v([x])
=
\sum_{x\in E}e_1(v)^x
=
P_E(e_1(v)).
$$
The following elementary consequence of Fourier analysis on $G_{\A}$
will be used repeatedly.

\begin{definition}
  Let $X\subset\Z^n$ be a nonempty finite set and $q\ge 1$ be an integer. We call $X$ a {\it $q$-fold complete residue set modulo $\A\Z^n$} if every residue class modulo $\A\Z^n$ contains exactly $q$ elements of $X$. In particular, every residue class is represented by $X$.
\end{definition}

\begin{proposition}\label{prop-basic} Let $X\subset\Z^n$ be a nonempty finite set and $q\ge 1$ be an integer. 
\begin{enumerate}
\item[(i)]
If $X$ is a $q$-fold complete residue set modulo $\A\Z^n$, then
for every $v\in\Z^n\setminus \A^T\Z^n$,
$$
P_X(e_1(v))=0.
$$

\item[(ii)]
If
$$
P_X(e_1(v))=0,
\qquad
v\in\Z^n\setminus \A^T\Z^n,
$$
then $X$ is a $\frac{\#X}{m}$-fold complete residue set modulo $\A\Z^n$
\end{enumerate}
\end{proposition}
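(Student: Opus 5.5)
The plan is to pass to the counting function of $X$ on the finite group $\Z^n_{\A}$ and use orthogonality of characters. Define $f:\Z^n_{\A}\to\Z_{\ge0}$ by $f([x])=\#\{y\in X: y\equiv x \pmod{\A\Z^n}\}$. Grouping the elements of $X$ by residue class, the identity $\sum_{x\in E}\chi_v([x])=P_E(e_1(v))$ recorded above gives
$$
P_X(e_1(v))=\sum_{[x]\in \Z^n_{\A}} f([x])\,\chi_v([x])=m\,\widehat f(\overline{\chi_v}),
$$
up to the normalization convention for $\widehat f$. Thus the statement reduces to a claim about the Fourier coefficients of $f$ at nontrivial characters. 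We recall that $\chi_v$ is nontrivial exactly when $v\notin\A^T\Z^n$, and that every character of $\Z^n_{\A}$ arises as some $\chi_v$ via the identification $\widehat{\Z^n_{\A}}\simeq \Z^n/\A^T\Z^n$.

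For (i), the hypothesis says $f\equiv q$ is constant. Hence $P_X(e_1(v))=q\sum_{g\in \Z^n_{\A}}\chi_v(g)$. For nontrivial $\chi_v$ this sum vanishes by the standard argument: pick $h$ with $\chi_v(h)\neq1$; translation invariance gives $\chi_v(h)S=S$, so $S=0$.

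For (ii), the hypothesis says $\sum_g f(g)\chi(g)=0$ for every nontrivial character $\chi$, since every nontrivial character is some $\chi_v$ with $v\notin\A^T\Z^n$. Apply Fourier inversion on the finite abelian group:
$$
f(g)=\frac1m\sum_{\chi}\Bigl(\sum_{h}f(h)\overline{\chi(h)}\Bigr)\chi(g).
$$
There is a small point here about conjugates. We have $\overline{\chi_v}=\chi_{-v}$, and $-v\notin\A^T\Z^n$ whenever $v\notin\A^T\Z^n$, so the vanishing hypothesis also covers the conjugate sums. Only the trivial character then contributes, and $f(g)=\frac1m\sum_h f(h)=\frac{\#X}{m}$ for every $g$. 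So $f$ is constant, equal to $\#X/m$, which forces this number to be a positive integer because $f$ takes integer values and $X\ne\emptyset$. Therefore $X$ is a $\frac{\#X}{m}$-fold complete residue set.

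The argument is entirely routine. The only step needing care is the bookkeeping in (ii): the identification of the full dual group with $\Z^n/\A^T\Z^n$, so that every nontrivial character is covered, and the handling of conjugation. Both are already supplied by the preliminaries stated above.
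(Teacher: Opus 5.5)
Your proposal is correct and follows the paper's proof essentially step for step: you use the representation-counting function on $\Z^n_{\A}$, the vanishing of nontrivial character sums for (i), and Fourier inversion for (ii). Your remarks on conjugation ($\overline{\chi_v}=\chi_{-v}$ with $-v\notin\A^T\Z^n$) and on the integrality of $\#X/m$ make explicit two small points that the paper's proof leaves implicit.
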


\begin{proof}
For {\rm (i)}, since $X$ is a $q$-fold complete residue set modulo $\A\Z^n$, every element of $G_{\A}$ is represented exactly $q$ times by $X$. Therefore, for every nontrivial character $\chi_v$ of $ \Z^n_{\A} $,
$$0=q\sum_{[g]\in  \Z^n_{\A} }\chi_v([g])=\sum_{x\in X}\chi_v([x]).$$
Therefore, 
$$P_{X}(e_1(v))=0,$$
which proves {\rm (i)}.

For {\rm(ii)}, we define the representation-counting function $f_X$ on the finite group $ \Z^n_{\A} $ by
$$f_X([g])=\#\left\{x\in X: x\equiv g\pmod{\A\Z^n}\right\}.$$
For every nontrivial character $\chi_v$,
\begin{align*}
\widehat{f_X}(v)=\sum_{[g]\in  \Z^n_{\A} }f_X([g])\chi_v([g])=P_{X}(e_1(v))=0.
\end{align*} 
Therefore all nontrivial Fourier coefficients of $f_X$ vanish. By Fourier inversion on $ \Z^n_{\A} $, we have for every $[g]\in  \Z^n_{\A}$, 
$$f_X([g])=\frac1{m}\widehat{f_X}([0])=\frac{\#X}{m}.$$
The conclusion follows.
\end{proof}

\subsection{Kenyon's criterion} We next recall the Fourier criterion of Kenyon for self-affine tiles. 
For $k\geq1$, recall that 
$$
\mathcal Z_k(\A,\D )=
\left\{
v\in\Z^n:
P_\D \big(e_{k}(v)\big)=0
\right\}.
$$

\begin{theorem}[Kenyon's criterion {\cite{K92-2,LW96-1}}]
The self-affine set $T(\A,\D )$ is a tile if and only if for every
$\boldsymbol{0}\neq v\in\Z^n$, there exists an integer $k\geq1$ such that
$$
P_\D \big(e_k(v)\big)=0.
$$
Equivalently,
$$
\Z^n\setminus\{\boldsymbol{0}\}
\subset
\bigcup_{k=1}^{\infty}
\mathcal Z_k(\A,\D ).
$$
\end{theorem}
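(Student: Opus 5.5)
We prove Kenyon's criterion through the Fourier transform of the natural self-affine measure. Let $\mu$ be the probability measure with $\mu=\frac1m\sum_{d\in\D}\mu\circ\varphi_d^{-1}$. Iterating, its Fourier transform factors as
$$
\widehat{\mu}(\xi)=\prod_{k=1}^{\infty}\frac1m P_\D\bigl(e((\A^T)^{-k}\xi)\bigr),
$$
with the convention $\widehat\mu(\xi)=\int e(-\xi\cdot x)\,d\mu$ and signs adjusted accordingly. The product converges because $\A$ is expanding and each factor is $1+O(\|(\A^T)^{-k}\xi\|)$. For $v\in\Z^n$ this reads $\widehat\mu(v)=\prod_{k\ge1}m^{-1}P_\D(e_k(v))$. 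Since every factor with $k$ large is nonzero, $\widehat\mu(v)=0$ if and only if $v\in\bigcup_{k\ge1}\mathcal Z_k(\A,\D)$. So the criterion is equivalent to the statement
$$
T \text{ is a tile}\iff \widehat\mu(v)=0 \text{ for all } v\in\Z^n\setminus\{\boldsymbol 0\}.
$$

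The key step is the standard fact that $T$ is a tile if and only if $|T|>0$, and that in general $|T|\le 1$ times a suitable normalisation. More precisely, $|T|>0$ holds if and only if the sets $\D_{\A,k}$ have exactly $m^k$ distinct elements for all $k$, and in that case $\mu=|T|^{-1}\one_T\,dx$. We may reduce to the case where $\D$ generates $\Z^n$ (replace $\Z^n$ by the lattice generated by $\D-\D$ via a change of coordinates; the criterion transforms compatibly). Then $T+\Z^n$ is a multiple covering. We consider the periodization
$$
g(x)=\sum_{j\in\Z^n}\mu(\,\cdot\,)\text{-density at } x-j,
$$
or, better, the $\Z^n$-periodized measure $\Pi\mu$ on the torus $\R^n/\Z^n$. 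Its Fourier coefficients are exactly $\widehat\mu(v)$ for $v\in\Z^n$.

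\textbf{($\Leftarrow$).} If $\widehat\mu(v)=0$ for all $v\ne\boldsymbol 0$, then $\Pi\mu$ is Haar measure on the torus. Since $\mu$ is supported on the compact set $T$, the projection of $T$ to the torus has full measure, so $|T|>0$. By the result of Bandt \cite{B91} quoted in the introduction, or directly by Lagarias--Wang, positive measure gives nonempty interior and hence $T$ is a tile.

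\textbf{($\Rightarrow$).} If $T$ is a tile, then $\mu$ is normalized Lebesgue measure on $T$, and $\D_{\A,k}$ has $m^k$ distinct elements. The main obstacle is to show that $\Z^n$ gives a $q$-fold tiling of $T$, i.e.\ that $\sum_{j}\one_T(x-j)$ is constant. For this I would use the self-replicating identity $\A^kT=T+\D_{\A,k}$. Set $f=\sum_{j\in\Z^n}\one_T(\cdot-j)$. Then $f(\A^{-k}\cdot)$ counts coverings of $\A^kT+\A^k\Z^n$, which equals $\sum_{j\in\D_{\A,k}+\A^k\Z^n}\one_T(x-j)\le f$. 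Averaging over a fundamental domain gives equality of integrals, hence $f(\A^{-k}x)=\sum_{j\in \D_{\A,k}+\A^k\Z^n}\one_T(x-j)$ almost everywhere. In particular $f$ is invariant under $x\mapsto\A^{-k}x$ modulo $\Z^n$ in the sense of distributions. Passing to Fourier coefficients, this forces $\hat f(v)=\hat f((\A^T)^{k}v)$. Since $\hat f(w)\to0$ along $w=(\A^T)^kv\to\infty$ by Riemann--Lebesgue for the $L^1$ function $\one_T$, we get $\hat f(v)=0$ for $v\ne\boldsymbol 0$. Hence $\widehat\mu(v)=|T|^{-1}\widehat{\one_T}(v)=0$. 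The delicate point is the inequality/equality bookkeeping in the first step of this direction, which is where I expect to spend the effort.
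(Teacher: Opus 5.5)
The paper gives no proof of this statement: it is quoted from Kenyon and Lagarias--Wang and used only through Corollary~\ref{coro-Kenyon}. So your argument has to stand on its own.

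Its skeleton is sound, and your ($\Leftarrow$) direction is correct. If every nonzero $v\in\Z^n$ lies in some $\mathcal Z_k(\A,\D)$, then a factor of the convergent product vanishes, so $\widehat\mu=0$ on $\Z^n\setminus\{\boldsymbol 0\}$. Hence the periodization of $\mu$ is Haar measure, and the image of $T$ in $\R^n/\Z^n$ has full measure. Since the quotient map does not increase Lebesgue measure, $|T|\ge 1$, and Lagarias--Wang then give nonempty interior.

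Two asides. First, the reduction to the lattice generated by $\D-\D$ is unnecessary and, as phrased, inaccurate. That lattice need not be $\A$-invariant or of full rank, and a change of coordinates would replace $\Z^n$ by a dual lattice in the criterion. Everything works directly on $\Z^n$ because $\D\subset\Z^n$. Second, the remark $|T|\le 1$ is false: $|T|=3$ for $T=[0,3]\times[0,1]$ in the paper's last example. You never use it, though.

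The genuine gap is in ($\Rightarrow$), exactly at the step you flagged: the covering bookkeeping is wrong as written. The family $\D_{\A,k}+\A^k\Z^n$ has to be counted with multiplicity. It has repetitions whenever two elements of $\D_{\A,k}$ are congruent modulo $\A^k\Z^n$, which happens as soon as $\D$ is not a complete residue system. For $\A=[4]$ and $\D=\{0,1,8,25\}$ from the introduction, $0\equiv 8$ and $1\equiv 25\pmod 4$, so
\[
f(x/4)=\sum_{d\in\D}\sum_{j\in\Z}\one_T(x-d-4j)=2\sum_{y\in\{0,1\}+4\Z}\one_T(x-y).
\]
Read as a sum over a set, your identity is off by a factor $2$: its mean is $|T|/2$, not $|T|$, so the ``equal integrals'' step fails. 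Read with multiplicities, nothing justifies the bound by $f$.

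None of this is needed, though. Your own product formula gives $\widehat\mu(\A^T\xi)=m^{-1}P_\D(e(\xi))\,\widehat\mu(\xi)$, up to your sign convention. For $v\in\Z^n$ one has $e(v)=\mathbf 1$ and $P_\D(\mathbf 1)=m$, so $\widehat\mu((\A^T)^kv)=\widehat\mu(v)$ for all $k\ge 0$. Since $\mu=|T|^{-1}\one_T\,dx$ is absolutely continuous and $\|(\A^T)^kv\|\to\infty$ for $v\neq\boldsymbol 0$, Riemann--Lebesgue forces $\widehat\mu(v)=0$. Because $\sum_k|1-m^{-1}P_\D(e_k(v))|<\infty$, the product can vanish only if one of its factors does, which puts $v$ in some $\mathcal Z_k(\A,\D)$.

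If you prefer to keep the periodization, keep the multiplicities. The identity $f(\A^{-k}x)=\sum_{d\in\D_{\A,k}}\sum_{j\in\Z^n}\one_T(x-d-\A^kj)$ holds exactly. Comparing the $e(-v\cdot x)$-averages of both sides for $v\in\Z^n$ gives $\widehat{\one_T}((\A^T)^kv)=\widehat{\one_T}(v)$ directly, with no inequality at all.
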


Since $\A$ is expanding, $\bigcap_{k=1}^{\infty}(\A^T)^k\Z^n=\{\boldsymbol{0}\}.$ Consequently, every $\boldsymbol{0}\neq v\in\Z^n$ can be written uniquely in the form $v=(\A^T)^\ell u$ for some $\ell\geq0$ and $u\in\Z^n\setminus \A^T\Z^n.$ Then,
$$
u\in {\mathcal Z}_k(\A,\D ) \quad \Longleftrightarrow\quad v\in {\mathcal Z}_{k+\ell}(\A,\D ).
$$
Thus Kenyon's criterion admits the following reduced form.

\begin{corollary}\label{coro-Kenyon}
    The self-affine set $T(\A,\D )$ is a tile if and only if 
    $$\Z^n\setminus \A^T\Z^n\subset\bigcup_{k=1}^{\infty}\mathcal Z_k(\A,\D ).$$
\end{corollary}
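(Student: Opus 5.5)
The statement to prove is Corollary~\ref{coro-Kenyon}, the reduced form of Kenyon's criterion. I would derive it directly from Kenyon's criterion by scaling a nonzero frequency down to a representative outside $\A^T\Z^n$. The ``only if'' direction is immediate. If $T(\A,\D)$ is a tile, Kenyon's criterion gives $\Z^n\setminus\{\boldsymbol 0\}\subset\bigcup_{k\ge1}\mathcal Z_k(\A,\D)$. Since $\boldsymbol 0\in\A^T\Z^n$, we have $\Z^n\setminus\A^T\Z^n\subset\Z^n\setminus\{\boldsymbol 0\}$, and the inclusion follows.

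For the ``if'' direction, fix $\boldsymbol 0\neq v\in\Z^n$. The first step is to produce the decomposition $v=(\A^T)^\ell u$ with $\ell\ge0$ and $u\in\Z^n\setminus\A^T\Z^n$. Let $\ell$ be the largest integer with $v\in(\A^T)^\ell\Z^n$, and set $u=(\A^T)^{-\ell}v\in\Z^n$. By maximality, $u\notin\A^T\Z^n$.

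The only point needing care is that this $\ell$ exists, i.e.\ that $\bigcap_{k\ge1}(\A^T)^k\Z^n=\{\boldsymbol 0\}$. I would argue this as follows. The matrix $\A^T$ has the same eigenvalues as $\A$, so $(\A^T)^{-1}$ has spectral radius $<1$ and $\|(\A^T)^{-k}\|\to0$. If $v\in(\A^T)^k\Z^n$ for every $k$, then $(\A^T)^{-k}v$ is a nonzero integer vector, so $\|(\A^T)^{-k}v\|\ge1$ for all $k$. But $\|(\A^T)^{-k}v\|\le\|(\A^T)^{-k}\|\,\|v\|\to0$, a contradiction. Uniqueness of $\ell$ is not actually needed for the argument.

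The second step is the identity $e_k(u)=e_{k+\ell}(v)$, which holds because
\[
(\A^T)^{-k}u=(\A^T)^{-k}(\A^T)^{-\ell}v=(\A^T)^{-(k+\ell)}v .
\]
Hence $P_\D(e_k(u))=P_\D(e_{k+\ell}(v))$, so $u\in\mathcal Z_k(\A,\D)$ if and only if $v\in\mathcal Z_{k+\ell}(\A,\D)$.

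Now, by hypothesis, $u\in\mathcal Z_k(\A,\D)$ for some $k\ge1$. Then $v\in\mathcal Z_{k+\ell}(\A,\D)$ with $k+\ell\ge1$. Since $v\neq\boldsymbol 0$ was arbitrary, this gives $\Z^n\setminus\{\boldsymbol 0\}\subset\bigcup_{k\ge1}\mathcal Z_k(\A,\D)$, and Kenyon's criterion yields that $T(\A,\D)$ is a tile.

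There is no genuine obstacle here. The only non-formal ingredient is the existence of the maximal exponent $\ell$, which is handled by the expanding hypothesis as described above.
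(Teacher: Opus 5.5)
Your proposal is correct and follows essentially the same route as the paper. Both write $v=(\A^T)^\ell u$ with $u\in\Z^n\setminus\A^T\Z^n$, using $\bigcap_{k\ge1}(\A^T)^k\Z^n=\{\boldsymbol 0\}$, and transfer zeros via $e_k(u)=e_{k+\ell}(v)$; you also spell out the trivial ``only if'' direction and the norm argument for the trivial intersection, which the paper leaves implicit.
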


\subsection{The Noetherian property of the algebraic torus}\label{section:Noetherian}

We finally recall a few basic facts concerning the Zariski topology on the algebraic torus $(\mathbb C^\times)^n, \mathbb C^\times=\mathbb C\setminus\{0\},$
which will be used in the proof of Theorem~\ref{fzp-0}. We refer to \cite{CLS11,Har77} for standard facts on algebraic varieties and Laurent polynomials. Given a finite set  $E\subset\Z^n$, a Laurent polynomial on $(\mathbb C^\times)^n$ is defined as
$$
f(z)=\sum_{d\in E}c_d z^d, \quad c_d\in\mathbb C.
$$
The Laurent polynomials form the ring
$
\mathbb C[z_1^{\pm1},\ldots,z_n^{\pm1}].
$
For a family
$\mathcal F\subset\mathbb C[z_1^{\pm1},\ldots,z_n^{\pm1}]$, define
$$
Z(\mathcal F)
=
\left\{
z\in(\mathbb C^\times)^n:
f(z)=0\ \text{for every }f\in\mathcal F
\right\}.
$$
The sets of the form $Z(\mathcal F)$ are precisely the Zariski
closed subsets of $(\mathbb C^\times)^n$. The following standard fact will be essential in our argument.

\begin{proposition}\label{prop:torus-Noetherian}
The algebraic torus $(\mathbb C^\times)^n$, endowed with the
Zariski topology, is a Noetherian topological space. Consequently,
every descending chain
$$
V_1\supset V_2\supset V_3\supset\cdots
$$
of Zariski closed subsets of $(\mathbb C^\times)^n$ eventually
stabilizes.
\end{proposition}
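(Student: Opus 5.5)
The statement just given, Proposition~\ref{prop:torus-Noetherian}, is the classical fact that the torus is a Noetherian space. I will reduce it to Hilbert's basis theorem, and the only care needed is in passing from Laurent polynomials to ordinary polynomials.

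\textbf{Step 1: the Laurent ring is Noetherian.} The ring $R=\mathbb C[z_1^{\pm1},\ldots,z_n^{\pm1}]$ is the localization of the polynomial ring $\mathbb C[z_1,\ldots,z_n]$ at the multiplicative set generated by the monomial $z_1\cdots z_n$. By Hilbert's basis theorem, $\mathbb C[z_1,\ldots,z_n]$ is Noetherian. A localization of a Noetherian ring is Noetherian, because every ideal $I$ of $S^{-1}A$ is the extension of its contraction $I\cap A$. Hence $R$ is Noetherian.

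\textbf{Step 2: closed sets correspond to radical ideals.} For a Zariski closed set $V\subset(\mathbb C^\times)^n$, put
$$
I(V)=\{f\in R: f|_V=0\}.
$$
This is an ideal of $R$, and $V\subset W$ implies $I(V)\supset I(W)$. I also need $Z(I(V))=V$ for closed $V$. If $V=Z(\mathcal F)$, then $\mathcal F\subset I(V)$, so $Z(I(V))\subset Z(\mathcal F)=V$. The reverse inclusion $V\subset Z(I(V))$ is trivial.

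\textbf{Step 3: conclude.} Given a descending chain $V_1\supset V_2\supset\cdots$ of closed sets, the ideals $I(V_1)\subset I(V_2)\subset\cdots$ form an ascending chain. By Step 1 this chain stabilizes, so $I(V_k)=I(V_{k_0})$ for all $k\ge k_0$. Applying $Z$ and using Step 2 gives $V_k=V_{k_0}$ for all $k\ge k_0$. This is exactly the descending chain condition on closed sets, which is the definition of a Noetherian topological space.

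\textbf{Alternative route.} One can instead realize $(\mathbb C^\times)^n$ as the closed subvariety $\{(z,w)\in\mathbb C^{n+1}: z_1\cdots z_n w=1\}$. Closed sets in the torus then correspond to closed sets of affine space, via $z_j^{-1}=w\prod_{i\ne j}z_i$, and affine space is Noetherian.

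\textbf{Main obstacle.} None of these steps is deep. The only point requiring care is Step 1: Hilbert's basis theorem must be transferred to Laurent polynomials, which is done either through the localization argument or through the embedding in the alternative route. Nullstellensatz-type statements are not needed, since Step 2 only uses the trivial identity $Z(I(V))=V$ for closed $V$.
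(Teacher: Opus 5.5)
Your proof is correct and complete. You transfer Hilbert's basis theorem to $\mathbb C[z_1^{\pm1},\ldots,z_n^{\pm1}]$ by localization, then use the inclusion-reversing assignment $V\mapsto I(V)$ together with $Z(I(V))=V$ for closed $V$, and this yields the descending chain condition exactly as stated. The paper gives no proof of this proposition; it treats it as a standard fact with a pointer to Cox--Little--Schenck and Hartshorne, and your argument is the standard one behind that citation.
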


We shall also need the monomial maps induced by integer matrices. For $\textbf{B}=(b_{ij})\in M_n(\Z)$, define
$$
\phi_\textbf{B}:(\mathbb C^\times)^n\longrightarrow(\mathbb C^\times)^n
$$
by
$$
\phi_\textbf{B}(z)
=
\left(
\prod_{j=1}^n z_j^{b_{1j}},
\ldots,
\prod_{j=1}^n z_j^{b_{nj}}
\right).
$$

\begin{lemma}\label{lem-conti}
For every $\textbf{B}\in M_n(\Z)$, the map $\phi_\textbf{B}$ is continuous
with respect to the Zariski topology.
\end{lemma}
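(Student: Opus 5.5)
The plan is to verify continuity directly from the definition of the Zariski topology, by pulling back closed sets. Let $V=Z(\mathcal F)$ be an arbitrary Zariski closed subset of $(\mathbb C^\times)^n$, with $\mathcal F\subset\mathbb C[z_1^{\pm1},\ldots,z_n^{\pm1}]$. It suffices to show that $\phi_{\textbf{B}}^{-1}(V)$ is again of the form $Z(\mathcal G)$ for some family $\mathcal G$ of Laurent polynomials.

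The key step is that composing a Laurent polynomial with $\phi_{\textbf{B}}$ gives a Laurent polynomial. For a monomial $w^d$ with $d\in\Z^n$, write $w=\phi_{\textbf{B}}(z)$, so that $w_i=z^{r_i}$, where $r_i$ is the $i$-th row of $\textbf{B}$. Then
$$
w^d=\prod_{i=1}^n\Bigl(\prod_{j=1}^n z_j^{b_{ij}}\Bigr)^{d_i}=\prod_{j=1}^n z_j^{\sum_i d_i b_{ij}}=z^{\textbf{B}^T d}.
$$
Negative exponents cause no difficulty because every coordinate of $z$ lies in $\mathbb C^\times$. Consequently, for $f(w)=\sum_{d\in E}c_d w^d$ we get
$$
(f\circ\phi_{\textbf{B}})(z)=\sum_{d\in E}c_d z^{\textbf{B}^Td}.
$$
This is again a Laurent polynomial in $z$; distinct $d$ may give the same exponent, in which case the coefficients simply add. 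Note also that $\phi_{\textbf{B}}$ does map $(\mathbb C^\times)^n$ into itself, since a product of nonzero numbers is nonzero.

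Now define $\mathcal G=\{f\circ\phi_{\textbf{B}}:f\in\mathcal F\}$. Then $z\in\phi_{\textbf{B}}^{-1}(V)$ if and only if $f(\phi_{\textbf{B}}(z))=0$ for every $f\in\mathcal F$, which is exactly the statement $z\in Z(\mathcal G)$. So the preimage of every closed set is closed, and $\phi_{\textbf{B}}$ is Zariski continuous.

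There is no serious obstacle. The only points needing care are the exponent bookkeeping, which produces $\textbf{B}^Td$, and the observation that the map stays inside the torus. Neither requires $\textbf{B}$ to be invertible, so the argument covers every $\textbf{B}\in M_n(\Z)$.
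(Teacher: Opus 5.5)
Your proposal is correct and follows the paper's proof essentially verbatim: you compute $(\phi_{\textbf{B}}(z))^d=z^{\textbf{B}^Td}$, conclude that $f\circ\phi_{\textbf{B}}$ is a Laurent polynomial, and identify $\phi_{\textbf{B}}^{-1}(Z(\mathcal F))$ with $Z(\{f\circ\phi_{\textbf{B}}:f\in\mathcal F\})$. Your added remarks, that $\phi_{\textbf{B}}$ maps the torus into itself and that invertibility of $\textbf{B}$ is not needed, are accurate details the paper leaves implicit.
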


\begin{proof}
For every $v\in\Z^n$,
$$
(\phi_\textbf{B}(z))^v=z^{\textbf{B}^Tv}.
$$
Hence, if $f$ is a Laurent polynomial, then $f\circ\phi_\textbf{B}$ is again
a Laurent polynomial. Therefore, for every family $\mathcal F$ of
Laurent polynomials,
$$
\phi_\textbf{B}^{-1}\bigl(Z(\mathcal F)\bigr)
=
Z\left(
\{f\circ\phi_\textbf{B}:f\in\mathcal F\}
\right),
$$
which is Zariski closed. Hence $\phi_\textbf{B}$ is continuous.
\end{proof}

In particular, let $\phi:=\phi_{\A^T}.$ For $j\ge 0$, let $\phi^j$ denote the $j-$fold iterate of $\phi$, with $\phi^{0}:=\operatorname{id} $. Then, for every $x\in\R^n$, we have $\phi(e(x))=e(\A^Tx).$
Therefore, for each $k\geq1$,
one has
$$
\phi^j(e_k(v))=e_{k-j}(v),
\qquad 0\leq j\leq k.
$$
In particular, if $v\in\Z^n$, then
$$
\phi^k(e_k(v))=e(v)=\mathbf 1,
$$
where $\mathbf 1=(1,\ldots,1)$.

\section{The finite zero-covering theorem}\label{sec:finite-zero-covering}
Throughout the rest of this paper, we write $T:=T(\A,\D ).$ In this section, we prove that every integral self-affine tile
$T$ satisfies the finite zero-covering condition in Theorem \ref{fzp-0}. The proof consists of three steps. We first decompose $T$ into finitely many measurable sets whose integer fibers share a common tiling complement. We then show that these fibers are uniformly distributed modulo $\A^k\Z^n$ after adding the finite-level digit sets $\D _{\A,k}$. Finally, a Noetherian argument on the algebraic torus yields a uniform bound for the depth of the zeros.

We begin with the following theorem of Lagarias and Wang.

\begin{theorem}[{\cite[Theorem 1.2]{LW96-2}}]\label{thm:integer-tiling-set}
If $T$ is an integral self-affine tile in $\R^n$, then it admits a tiling set $\J \subset\Z^n$.
\end{theorem}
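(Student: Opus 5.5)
We argue as Lagarias and Wang do, in three steps: a lattice tiling by $\Z^n$ with some multiplicity, integer fibres of $T$, and self-replication to fix a bad tiling set. Throughout, $T$ is a tile, so it has positive measure, $|\partial T|=0$, and $\A T=T+\D$. We may assume $\D$ is primitive, i.e.\ $\D-\D$ generates $\Z^n$. Otherwise let $L$ be the lattice generated by $\D-\D$ and choose $d_0\in\D$. Then $\A L\subset L$ and $T-\sum_{k\ge1}\A^{-k}d_0$ is a self-affine tile with digits in $L$. After a linear change of variables sending $L$ to $\Z^n$ we are in the primitive case. A tiling set of the transformed tile inside $\Z^n$ pulls back into $L+\text{const}$, and the constant can be absorbed by a translation, since translating a tiling set gives another tiling set.

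\textbf{Step 1: $\Z^n$ is a $q$-fold tiling set.} Put $f=\sum_{k\in\Z^n}\one_T(\cdot-k)$. This function is $\Z^n$-periodic with $\int_{[0,1)^n}f=|T|$. Using $\A T=T+\D$ and the fact that $\#\D=|\det\A|$ forces the measure-disjoint union (which holds since $T$ is a tile), we get
\[
f(\A^{-1}x)\cdot\ ?
\]
Concretely, $f(\A x)=\sum_{k}\sum_{d\in\D}\one_T(x-\A^{-1}(k+d))$. Writing $\A^{-1}(k+d)$ modulo $\Z^n$ shows that $f$ satisfies a transfer-operator identity $f(\A x)=\sum_{d}\dots$. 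The cleaner route is Fourier. The coefficients of $f$ are $\hat f(\xi)=\hat{\one}_T(\xi)$ for $\xi\in\Z^n$, and $\hat{\one}_T(\xi)=\prod_{k\ge1}m^{-1}P_\D(e_k(\xi))\cdot|T|$. By Kenyon's criterion (Corollary~\ref{coro-Kenyon}), $\hat{\one}_T(\xi)=0$ for every $0\neq\xi\in\Z^n$. Hence $f\equiv|T|$ a.e., so $|T|=q$ is an integer and $\Z^n$ gives a $q$-fold tiling.

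\textbf{Step 2: fibre decomposition.} For $x\in[0,1)^n$ set $F_x=\{k\in\Z^n: x+k\in T\}$. By Step 1, $\#F_x=q$ for a.e.\ $x$. The self-affine relation gives $F_{\{\A^{-1}\cdot\}}$-type recursions: $\A(x+F_x)$ is covered by the fibres of $x'\equiv\A x$ translated by $\D$. Any $\J\subset\Z^n$ with $F_x\oplus\J=\Z^n$ for a.e.\ $x$ is a tiling set of $T$, because
\[
\sum_{j\in\J}\one_T(x+k-j)=\#\{j\in\J: k-j\in F_x\}=1 .
\]
So it suffices to find a single $\J$ that is a common tiling complement of all the fibres.

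\textbf{Step 3: common complement, the main obstacle.} Mimicking Lagarias--Wang, we consider the sets $T_k=\A^{-k}(T+\D_{\A,k})$ and pass to a limit of tiling sets. If $\D_{\A,k}$ is a complete residue system modulo $\A^k$ for some $k$ (it need not be), then $\J=\Z^n$ after a sublattice adjustment. In general, define the subgroup $\Lambda$ of $\Z^n$ generated by all differences $\bigcup_N(\D_{\A,N}-\D_{\A,N})$, restricted to those lying in a single fibre pattern. The key claim to aim for is that there is a sublattice $\Lambda\subset\Z^n$ of index $q$ that is a complement of every $F_x$, i.e.\ $F_x$ is a complete residue set modulo $\Lambda$ for a.e.\ $x$.

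The planned way to get $\Lambda$ is Lagarias--Wang's argument. First, the fibres take only finitely many values up to translation, because $T$ is bounded. Second, self-replication says the fibre of $\A x$ is a union of $\A F_x+\D$ pieces; the set of fibre-difference sets is therefore invariant, and a pigeonhole/compactness argument over iterates produces a lattice $\Lambda$ containing all $\A^N\Z^n$-shifts that are consistent. If this direct construction stalls, the fallback is a compactness argument: tile large cubes by using $\A^kT=T+\D_{\A,k}$ together with a tiling of $\A^kT$ by $\A^k\Z^n$-translates with multiplicity, extract a finite tiling pattern, and take a limit of translates to get an integer tiling set $\J$. Integrality is preserved in the limit since all translates lie in $\Z^n$. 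This compactness fallback is the step I would actually execute, since it avoids needing the lattice $\Lambda$ explicitly.
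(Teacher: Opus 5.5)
The paper gives no proof of this statement; it quotes it from Lagarias--Wang. Your proposal therefore has to stand on its own, and it does not. Steps 1 and 2 are sound, but they only reformulate the problem. Step 2 is the converse of Lemma~\ref{lem-T}: you now need one $\J\subset\Z^n$ with $F_x\oplus\J=\Z^n$ for a.e.\ $x$. Step 3, which carries the entire content, is not carried out, and its announced key claim is false.

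Take $\A=[4]$ and $\D=\{0,1,8,9\}$. One checks $4T=T+\D$ for $T=[0,1]\cup[2,3]$, so $q=|T|=2$ and a.e.\ fibre is $\{0,2\}$. That fibre is not a complete residue set modulo $2\Z$, the only subgroup of index $2$. In fact this tile has no lattice tiling at all (it tiles by $\{0,1\}+4\Z$). So no index-$q$ lattice $\Lambda$ complementing all fibres can exist, and the construction you sketch (``differences restricted to a single fibre pattern'') is not even well defined.

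Two smaller points. First, the primitive reduction is wrong as written: the group generated by $\D-\D$ need not be $\A$-invariant or of full rank, so you would need $\Z[\A,\D]$. Nothing later uses it anyway. Second, Step 1 should not lean on Kenyon's criterion, whose necessity direction may itself rest on integer (multi-)tilings. A direct argument is available. For $\xi\in\Z^n$ one has $P_\D(e(\xi))=m$, hence $\widehat{\one_T}(\A^T\xi)=\widehat{\one_T}(\xi)$. Square-summability of the Fourier coefficients of $\sum_k\one_T(\cdot-k)$ along the infinite orbit $\{(\A^T)^j\xi\}_{j\ge0}$ then forces $\widehat{\one_T}(\xi)=0$ for $\xi\neq0$.

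The compactness fallback is a workable route, and it makes Steps 1--3 unnecessary. As stated, though, it is missing the ingredient that drives it. Your phrase about tiling $\A^kT$ by $\A^k\Z^n$-translates with multiplicity points the wrong way: $T+\D_{\A,k}+\A^k\Z^n$ is a $q$-fold tiling, and limits of such configurations give only $q$-fold tilings.

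What you need is an interior point of $T$. This is part of the paper's definition of a tile, or it follows from $|T|>0$ and $|\partial T|=0$. The argument then runs as follows.
\begin{enumerate}
\item If $B(x_0,\rho)\subset T$, then $\A^kT\supset B(\A^kx_0,\rho_k)$ with $\rho_k=\rho/\|\A^{-k}\|\to\infty$, since $\A$ is expanding.
\item Choose $c_k\in\Z^n$ with $|c_k-\A^kx_0|\le\sqrt n$.
\item Since $\A^kT=T+\D_{\A,k}$ and $|\A^kT|=m^k|T|$, the $m^k$ translates $T+d$, $d\in\D_{\A,k}-c_k$, are distinct, overlap pairwise only in null sets, and cover $B(0,\rho_k-\sqrt n)$.
\item As $\D_{\A,k}-c_k\subset\Z^n$, its trace on each fixed ball takes only finitely many values. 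A diagonal subsequence therefore stabilizes on every ball to a set $\J\subset\Z^n$.
\item Null overlaps and covering of a fixed ball are local conditions that hold for all large $k$ in the subsequence. Hence $\sum_{j\in\J}\one_T(x-j)=1$ for a.e.\ $x$.
\end{enumerate}
This argument, not the lattice $\Lambda$, is what proves the statement, and it needs to be written out for the proposal to be a proof.
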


Before introducing the integer fibers, we record a simple consequence 
of the self-affine identity. Recall that
$$
\D _{\A,k}
=
\D +\A\D +\cdots+\A^{k-1}\D .
$$
Since $T$ is a tile, it has positive Lebesgue measure and every element of $\D _{\A,k}$ has a unique
representation, and hence the sums in $\D_{\A, k}$ are direct \cite[Theorem 1.1]{LW96-2}.

For each $x\in [0, 1)^n$, define the integer fiber  
\begin{equation*}
 F_x:=\{z\in\Z^n:x+z\in T\}
\end{equation*}
Since $T$ is compact, the set $F_x$ is finite and uniformly bounded. Consequently, only finitely many different sets $F_x$ can occur. For a finite set $F\subset\Z^n$, the set
\begin{equation*}
 \begin{aligned}
T_F
&:=
\left\{
x\in[0,1)^n:F_x=F
\right\}
 \end{aligned}
\end{equation*}
is Borel measurable. Indeed, it follows directly from 
$$T_F=[0,1)^n\cap \bigcap_{k\in F}(T-k)\cap \bigcap_{a\in \underset{x\in[0, 1)^n}{\bigcup}F_x\setminus F}(\R^n\setminus(T-a)).$$

\begin{lemma}\label{lem-T}
 Suppose that $T$ tiles $\R^n$ with a tiling set
$ \J \subset\Z^n$. Then there exist finitely many finite
sets $F_1,\ldots,F_t\subset\Z^n$ and pairwise disjoint measurable sets
$T_1,\ldots,T_t\subset[0,1)^n$ of positive measure such that
\noindent

\medskip
{\rm(i)} $F_x=F_{\ell}, x\in T_{\ell}, 1\le \ell\le t$; 

\noindent

\medskip
{\rm(ii)} $[0, 1)^n=\bigsqcup_{\ell=1}^{t}T_{\ell}$ and $T=\bigsqcup_{\ell=1}^{t}\left(T_{\ell}+F_{\ell}\right)$ up to a null set;

\noindent

\medskip
{\rm(iii)}  $F_{\ell}\oplus\J=\Z^n, 1\le\ell\le t$
\end{lemma}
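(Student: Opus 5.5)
We take $F_1,\ldots,F_t$ to be the finitely many distinct fibers $F$ for which $T_F$ has positive Lebesgue measure, and set $T_\ell:=T_{F_\ell}$. The fibers $F_x$ are uniformly bounded subsets of $\Z^n$, so only finitely many sets $F$ occur. Each $T_F$ is Borel, as recorded above. The sets $T_F$ are pairwise disjoint and cover $[0,1)^n$. Discarding those of measure zero removes only a null set, so $[0,1)^n=\bigsqcup_\ell T_\ell$ up to a null set. To make this an honest equality, we absorb the discarded null pieces into, say, $T_1$. Then property (i) holds off a null set, which is harmless for everything that follows. If exact equality in (i) is wanted, we remove the null set $N$ from $[0,1)^n$ and note that all identities are only needed almost everywhere.

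For (ii) we use the decomposition $\R^n=\bigsqcup_{z\in\Z^n}([0,1)^n+z)$. A point $y=x+z$ with $x\in[0,1)^n$ and $z\in\Z^n$ lies in $T$ exactly when $z\in F_x$. Hence
\[
T=\bigsqcup_{x\in[0,1)^n}(x+F_x)=\bigsqcup_{F}(T_F+F),
\]
and dropping the null $T_F$ changes $T$ only by a null set. Disjointness of the pieces follows from the disjointness of the sets $T_F$ together with the uniqueness of the decomposition $y=x+z$.

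The substantive part is (iii). Writing $y=x+z$ and using $\J\subset\Z^n$, the tiling identity becomes
\[
\sum_{j\in\J}\one_T(x+z-j)=\sum_{j\in\J}\one_{F_x}(z-j)=\#\{(f,j)\in F_x\times\J: f+j=z\}.
\]
The tiling identity holds for almost every $y\in\R^n$. Since $\Z^n$ is countable, Fubini shows that for almost every $x\in[0,1)^n$ it holds for all $z\in\Z^n$ simultaneously. This is the one point requiring care: we must intersect countably many full-measure sets, one for each $z$. For such $x$ we therefore get
\[
\#\{(f,j)\in F_x\times\J: f+j=z\}=1 \quad\text{for every } z\in\Z^n,
\]
which says precisely that $F_x\oplus\J=\Z^n$. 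Each $T_\ell$ has positive measure, so it contains such a good $x$, and $F_x=F_\ell$ there. This gives (iii) for every $\ell$. The only expected obstacle is this measure-theoretic bookkeeping; no deeper input is needed beyond Theorem \ref{thm:integer-tiling-set}, which guarantees $\J\subset\Z^n$.
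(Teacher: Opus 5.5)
Your proposal is correct and follows essentially the paper's argument: keep the positive-measure fiber types $T_F$, note that the discarded pieces form a null set, and for (iii) pick $x\in T_\ell$ outside the countable union $\bigcup_{z\in\Z^n}(\mathcal N-z)$ of translates of the exceptional null set, which is all your ``Fubini'' step amounts to, so that the tiling identity restricts to $\one_{F_\ell}\ast\one_{\J}=1$ on $\Z^n$. Absorbing the null pieces into $T_1$ is unnecessary: the paper simply keeps $T_\ell=T_{F_\ell}$, so (i) holds exactly and the partition of $[0,1)^n$ in (ii) holds up to a null set.
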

\begin{proof}
Since only finitely many different fiber types occur, discard those set $T_F$ of measure zero and denote the remaining ones by $T_1,\ldots, T_t$, with corresponding fibers $F_1,\ldots, F_t$. By construction,
$$[0, 1)^n=\bigsqcup_{\ell=1}^{t}T_{\ell}$$
up to a null set, and
$$T=\bigsqcup_{\ell=1}^{t}\left(T_{\ell}+F_{\ell}\right)$$
up to a null set. This proves {\rm(i)} and {\rm(ii)}.

It remains to prove the common tiling property. Since $T$ tiles $\R^n$ using $\J$, there exists a null set ${\mathcal N}\subset\R^n$ such that
\begin{equation}\label{eq-2.2.1}
\sum_{j\in \J }\one_{T}(y-j)=1,\quad \text{ for every } y\in\R^n\setminus {\mathcal N}.
\end{equation}
Set
$${\mathcal N}_0 := [0,1)^n\cap\left(\bigcup_{z\in\Z^n}({\mathcal N}-z)\right).$$
Since $\Z^n$ is countable, ${\mathcal N}_0$ is also a null set.
Fix $1\le \ell \le t$. Since $|T_\ell|>0$, we may choose $x\in T_\ell \setminus {\mathcal N}_0$, then  $x+z\notin{\mathcal N}$ for every $z\in\Z^n$. Therefore, by \eqref{eq-2.2.1}, for every $z\in\Z^n$, 
\begin{equation*}
\sum_{j\in \J }\one_{T}(x+z-j)=1.
\end{equation*}
Since $z-j\in\Z^n$ and $F_x=F_\ell$,
$$\one_{T}(x+z-j)=\one_{F_\ell}(z-j).$$
Consequently, 
$$ \sum_{j\in \J }\one_{F_\ell}(z-j)=1, \quad\ \forall\ z\in\Z^n.$$
Thus $F_\ell$ tiles $\Z^n$ using $\J$. Since $\ell$ was arbitrary, all the fibers $F_\ell$ share the common tiling set $\J$.
\end{proof}

We next relate fibers at different scales. For any $x\in[0, 1)^n$, define  
$$\tau:[0,1)^n\to[0,1)^n,\qquad \tau(x)=\A x\pmod{\Z^n}.$$
For $k\geq1$, set 
$$a_{k}(x)=\A^kx-\tau^k(x)\in\Z^n.$$ 
Here $\tau^{-k}(E)$ always denotes the fully $k$-fold preimage of $E$, not an inverse function.

\begin{lemma}\label{lem-tx}
    For every $x\in [0, 1)^n$ and $k\geq1$, 
    \begin{equation*}
     a_k(x)+\A^k F_x=(F_{\tau^k(x)}+\D_{\A,k})\cap (a_k(x)+\A^k\Z^n).   
    \end{equation*}
\end{lemma}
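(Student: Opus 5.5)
The plan is to prove the identity by a direct double inclusion, using only the iterated set equation $\A^kT=T+\D_{\A,k}$ and the uniqueness of representations in $\D_{\A,k}$. Throughout we write $y:=\tau^k(x)$, so that $\A^kx=a_k(x)+y$ with $y\in[0,1)^n$ and $a_k(x)\in\Z^n$. This uses that $\A$ is an integer matrix, so that $\tau^k(x)\equiv \A^kx \pmod{\Z^n}$.

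For the inclusion "$\subset$", take $z\in F_x$, so that $x+z\in T$. Applying $\A^k$ gives
$$\A^k(x+z)=y+a_k(x)+\A^kz\in \A^kT=T+\D_{\A,k}.$$
Hence there are $d\in\D_{\A,k}$ and $t\in T$ with $y+a_k(x)+\A^kz=t+d$. Since $d$, $a_k(x)$ and $\A^kz$ are all integer vectors, we get $t=y+w$ with
$$w:=a_k(x)+\A^kz-d\in\Z^n.$$
Thus $y+w\in T$, that is, $w\in F_y$. It follows that
$$a_k(x)+\A^kz=w+d\in F_y+\D_{\A,k},$$
and this point lies in $a_k(x)+\A^k\Z^n$ trivially.

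For the inclusion "$\supset$", take $w\in F_y$ and $d\in\D_{\A,k}$ with $w+d=a_k(x)+\A^kz$ for some $z\in\Z^n$. Then $y+w\in T$, and therefore
$$\A^k(x+z)=y+a_k(x)+\A^kz=(y+w)+d\in T+\D_{\A,k}=\A^kT.$$
Since $\A$ is invertible, $x+z\in T$, so $z\in F_x$. This gives $a_k(x)+\A^kz\in a_k(x)+\A^kF_x$.

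Neither inclusion uses tiling or direct-sum properties; the argument rests only on the set equation and the integrality of $\A$ and $\D$. The only point that needs care, and where I expect any real obstacle, is the bookkeeping that the fractional part $y$ matches on both sides. This matching is exactly what the definition of $a_k$ ensures.
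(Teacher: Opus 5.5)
Your proof is correct and is essentially the paper's argument: a double inclusion using only $\A^kT=T+\D_{\A,k}$, with the integrality of $a_k(x)$, $\A^kz$ and $d$ pinning the fractional part to $\tau^k(x)$, and invertibility of $\A^k$ for the reverse inclusion. Your opening mention of uniqueness of representations in $\D_{\A,k}$ is superfluous, as you note yourself at the end; the paper's proof does not use it either.
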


\begin{proof}Let  $a\in F_x$. Then $x+a\in T$, and hence 
$${\textbf A}^k(x+a)=\tau^k(x)+a_k(x)+{\textbf A}^ka\in{\textbf A}^kT.$$
It follows from the functional equation $ {\textbf A}^kT = T +\D_{\A,k}$ that there is some $t\in T$ and $d\in\D_{\A,k}$ such that
$$\tau^k(x)+a_k(x)+{\textbf A}^ka=t+d.$$
All terms other than $\tau^k(x)$ and $t$ differ by an integer vector, so $t=\tau^k(x)+\kappa$ for some $\kappa\in\Z^n$. Thus $\kappa\in F_{\tau^k(x)}$ and 
$$a_k(x)+{\textbf A}^ka=\kappa+d\in (F_{\tau^k(x)}+\D_{\A,k})\cap (a_k(x)+\A^k\Z^n).$$
This proves one inclusion.

Conversely, any element $v$ in  $(F_{\tau^k(x)}+\D_{\A,k})\cap (a_k(x)+\A^k\Z^n)$ has two expansions 
$$v=a_k(x)+{\textbf A}^ka=\kappa+d$$
with $a\in\Z^n$, $\kappa\in F_{\tau^k(x)}$, and $d\in\D_{\A,k}$. The definition of $F_{\tau^k(x)}$ implies that $\tau^k(x)+\kappa\in T$. Together with $d\in\D_{\A,k}$, this gives
$$\A^k(x+a)=\tau^k(x)+a_k(x)+{\textbf A}^ka=\tau^k(x)+\kappa+d\in \A^kT$$
 Since $\A^k$ is invertible, $x+a\in T$, which forces $a\in F_x$. Therefore, $v\in a_k(x)+{\textbf A}^kF_x.$ This proves the reverse inclusion.

\end{proof}

\begin{lemma}\label{lem:fiber-cardinality}
    There is an integer $r\geq1$ such that $$\#F_1=\cdots=\#F_t=r.$$
Moreover, $|T|=r$.
\end{lemma}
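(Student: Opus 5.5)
Since every fiber $F_\ell$ tiles $\Z^n$ with the same tiling set $\J$ by Lemma~\ref{lem-T}(iii), the plan is to compare the densities of $F_\ell\oplus\J=\Z^n$ across different $\ell$. Let $Q_R=[-R,R]^n\cap\Z^n$. Each $F_\ell$ is contained in a fixed ball of radius $\rho$, since $T$ is compact. Counting points of $Q_R$ gives
\[
\#F_\ell\cdot\#(\J\cap Q_{R-\rho})\le \#Q_R\le \#F_\ell\cdot\#(\J\cap Q_{R+\rho}).
\]
Because $\#Q_{R\pm\rho}/\#Q_R\to1$, the quantity $\#(\J\cap Q_R)/\#Q_R$ is squeezed between $1/\#F_\ell$ times factors tending to $1$. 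Hence the limit
\[
\lim_{R\to\infty}\#(\J\cap Q_R)/\#Q_R
\]
exists and equals $1/\#F_\ell$. This limit does not depend on $\ell$, so all the cardinalities agree, and we call the common value $r$. Finiteness and nonemptiness of $F_\ell$ hold because $\#F_\ell\ge1$ is forced by $F_\ell\oplus\J=\Z^n$. The only mild technical point is this boundary-error estimate; a cruder alternative avoiding limits does not seem available without periodicity of $\J$, so the density argument is the key step.

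For the measure, I would use Lemma~\ref{lem-T}(ii). The decomposition $T=\bigsqcup_\ell(T_\ell+F_\ell)$ holds up to a null set. Each $T_\ell+F_\ell$ is a disjoint union of $\#F_\ell$ integer translates of $T_\ell$, and these are disjoint because $T_\ell\subset[0,1)^n$. Therefore
\[
|T|=\sum_{\ell=1}^t \#F_\ell\,|T_\ell|=r\sum_{\ell=1}^t|T_\ell|=r\,|[0,1)^n|=r,
\]
using $[0,1)^n=\bigsqcup_\ell T_\ell$ up to a null set. I would also check that the different pieces $T_\ell+F_\ell$ are disjoint. This holds because their fractional parts lie in the disjoint sets $T_\ell$, so no overlap needs to be ruled out separately. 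This completes the plan.
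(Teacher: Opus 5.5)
Your proposal is correct and follows the paper's proof. The common tiling set $\J$ has density $1/\#F_\ell$ for every $\ell$, which forces the cardinalities to agree; your box-counting estimate spells out the paper's appeal to natural density. Your computation $|T|=\sum_\ell \#F_\ell\,|T_\ell|=r$ via Lemma~\ref{lem-T}(ii) is the paper's Tonelli identity $|T|=\int_{[0,1)^n}\#F_x\,dx$ written piece by piece.
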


\begin{proof}
By the preceding fiber decomposition in Lemma \ref{lem-T} {\rm(iii)},
$$
    F_{\ell}\oplus \J=\Z^n,
    \qquad \ell=1,\ldots,t,
$$
Thus the tiling set $\J$ has natural density $1/\#F_{\ell}$. Since $\J$ is independent of $\ell$, all the sets $F_\ell$ have the same cardinality. Write
$$
\#F_1=\cdots=\#F_t=:r .
$$
Finally, by Tonelli's theorem,
$$
\begin{aligned}
    |T|
    &=\int_{[0,1)^n}\sum_{z\in\Z^n}
          \one_T(x+z)\,dx  \\
    &=\int_{[0,1)^n} \#F_x\,dx .
\end{aligned}
$$
Since the fiber types occurring on null sets may be discarded and $\#F_x=r$ for almost every $x\in [0,1)^n$, it follows that
$$
|T|=r.
$$
\end{proof}

The next result gives the main finite-level arithmetic structure of the integer fibers.

\begin{theorem}\label{th-com}
    For every $k\geq1$ and every $1\le\ell\le t$, 
    $$F_\ell+\D_{\A,k}=F_\ell\oplus\D_{\A,k},$$
    and this sum is a $r$-fold complete residue set modulo $\A^k\Z^n$.
\end{theorem}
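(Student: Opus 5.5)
We prove Theorem~\ref{th-com} by combining Lemma~\ref{lem-tx} with the fact that $T_\ell$ has positive measure and that the map $\tau^k$ preserves Lebesgue measure on $[0,1)^n$. The idea is to read Lemma~\ref{lem-tx} "backwards": for a fixed target fiber $F_\ell$, each residue class $a+\A^k\Z^n$ of the sum $F_\ell+\D_{\A,k}$ should be matched with a point $x$ with $\tau^k(x)\in T_\ell$ and $a_k(x)\equiv a$.

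\textbf{Step 1: every residue class is hit by a good preimage.} Fix $k\ge1$, $\ell$, and $a\in\Z^n$. The preimages of $y\in[0,1)^n$ under $\tau^k$ are exactly the points $x=\A^{-k}(y+b)$ with $b\in\Z^n$ and $x\in[0,1)^n$. Then $a_k(x)=b$, and $b$ ranges over a complete residue set modulo $\A^k\Z^n$; indeed $\tau^k$ is $m^k$-to-one. Hence, for each class $a+\A^k\Z^n$, the map $y\mapsto \A^{-k}(y+b)$ (with the unique representative $b$ in that class for which the point lies in $[0,1)^n$, chosen piecewise measurably) sends $T_\ell$ to a set of measure $m^{-k}|T_\ell|>0$. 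Therefore, for almost every such $y\in T_\ell$, the corresponding $x$ satisfies $x\in T_{\ell'}$ for some $\ell'$, so $\#F_x=r$ by Lemma~\ref{lem:fiber-cardinality}. Since the union of the null fiber types is a null set, and its preimage under the measure-preserving map $\tau^k$ is also null, we can choose $y\in T_\ell$ such that all $m^k$ preimages $x$ have fibers of size $r$.

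\textbf{Step 2: counting.} For such a $y$ and each preimage $x$, Lemma~\ref{lem-tx} gives
\[
\#\bigl((F_\ell+\D_{\A,k})\cap(a_k(x)+\A^k\Z^n)\bigr)=\#F_x=r .
\]
Here the sumset is counted as a set. Summing over the $m^k$ classes shows that the set $F_\ell+\D_{\A,k}$ has exactly $rm^k$ elements, with exactly $r$ in each class modulo $\A^k\Z^n$. Since $\#F_\ell\cdot\#\D_{\A,k}=r\cdot m^k$ (the sum defining $\D_{\A,k}$ being direct, so $\#\D_{\A,k}=m^k$), the map $(f,d)\mapsto f+d$ is injective. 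Thus the sum is direct, $F_\ell\oplus\D_{\A,k}$, and it is an $r$-fold complete residue set.

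\textbf{Main obstacle.} The delicate point is Step 1: ensuring that a single $y\in T_\ell$ can be chosen so that \emph{all} of its $\tau^k$-preimages avoid the discarded null fiber types, so that each of them has cardinality exactly $r$. This relies on $\tau^k$ being measure-preserving, in the sense that preimages of null sets are null; equivalently, $\tau^k$ consists of finitely many affine branches with Jacobian $m^k$, so images of null sets are null. A cleaner alternative is to avoid the discarded types entirely by checking directly that $\#F_x=r$ for every $x$ outside a null set, which is exactly what Step 1 needs.
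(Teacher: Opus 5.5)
Your proposal is correct and is essentially the paper's proof. Both choose $y\in T_\ell$ all of whose $m^k$ preimages under $\tau^k$ lie in positive-measure fiber types. Both then apply Lemma~\ref{lem-tx} to each preimage, getting exactly $r$ elements of $F_\ell+\D_{\A,k}$ in each class $a_k(x)+\A^k\Z^n$, and deduce directness from $\#(F_\ell+\D_{\A,k})=rm^k=\#F_\ell\cdot\#\D_{\A,k}$.

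One correction to Step 1 concerns the direction of the null-set property. The set of bad $y$ is the image $\tau^k(N)$ of the discarded null set $N$, so what you need is that $\tau^k$ maps null sets to null sets. This is what the paper states, and what you yourself note at the end via the affine branches. It is not the same as saying preimages of null sets are null. Both properties hold here, but only the former justifies the choice of $y$.
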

\begin{proof}
    In Lemma \ref{lem-T} (ii), it has been proved  that the set 
    $$\widetilde{T}:=\bigsqcup_{\ell=1}^{t} T_{\ell}=[0, 1)^n, \quad \text{up to a null set.}$$ 
   Since $\tau^k$ is induced by the linear map $\A^k x$, it maps null sets
to null sets. Hence, for almost every $y\in T_{\ell}$, every point of
$\tau^{-k}(y)$ belongs to $\widetilde T$.

Fix such a point $y\in T_\ell$. For every
$x\in\tau^{-k}(y)$, the fiber $F_x$ is one of
$F_1,\ldots,F_t$, and therefore $\#F_x=r.$
The map $\tau^k$ has exactly
$|\det \A|^k=m^k$
preimages in $[0,1)^n$. Moreover,
$$
\left\{
a_k(x):x\in\tau^{-k}(y)
\right\}
$$
is a complete residue system modulo $\A^k\Z^n$. Indeed, suppose that $$a_k(x_1)\equiv a_k(x_2)\pmod{\A^k\Z^n},$$ 
Since $\tau^k(x_1)=\tau^k(x_2)=y,$ we have 
$$\A^k(x_1-x_2)=a_k(x_1)-a_k(x_2)\in \A^k\Z^n.$$
Hence $x_1-x_2\in\Z^n$ and since $x_1, x_2\in[0, 1)^n$, one has $x_1=x_2$. As there are exactly $m^k$ such points, the claim follows.

Applying Lemma \ref{lem-tx} to every inverse image $x$ of $y$, we have
\begin{equation*}
    a_k(x) + \A^kF_x = (F_\ell +\D_{\A,k}) \cap (a_k(x) + \A^k\Z^n).
\end{equation*}
Since $\#F_x=r$, every residue class modulo $\A^k\Z^n$ contains exactly $r$ elements of $F_\ell+\D_{\A,k}$. Hence
$$\#(F_\ell+\D_{\A,k})=rm^k.$$
On the other hand, $$\#F_\ell\cdot\#\D_{\A ,k}=r\cdot m^k.$$ 
Therefore,
$$\#F_\ell\cdot\#\D_{\A,k}=\#(F_\ell+\D_{\A,k}).$$
This proves the sum is direct, while the preceding residue count proves that this direct sum is a $r$-fold complete residue set modulo $\A ^k\Z^n$.
\end{proof}

The preceding theorem immediately gives a Fourier obstruction.

\begin{lemma}\label{lem:fourier-obstruction}
    Let $v\in\Z^n\setminus \A^T\Z^n$ and $k\geq1$. If $v\not\in\bigcup_{j=1}^k{\mathcal Z}_j(\A, \D)$, then 
    $$P_{F_\ell}(e_k(v))=0, \quad\quad  \ell=1, \cdots, t.$$
\end{lemma}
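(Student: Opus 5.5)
We apply Theorem \ref{th-com} at level $k$ and use part (i) of Proposition \ref{prop-basic} with $\A$ replaced by $\A^k$. Since $F_\ell\oplus\D_{\A,k}$ is an $r$-fold complete residue set modulo $\A^k\Z^n$, its mask vanishes at every nontrivial character of $\Z^n/\A^k\Z^n$. The characters of $\Z^n/\A^k\Z^n$ are indexed by $\Z^n/(\A^T)^k\Z^n$, via $\chi_v([x])=e\bigl(x\cdot(\A^T)^{-k}v\bigr)=e_k(v)^x$. Hence, for every $v\in\Z^n\setminus(\A^T)^k\Z^n$,
$$
P_{F_\ell}(e_k(v))\,P_{\D_{\A,k}}(e_k(v))=P_{F_\ell\oplus\D_{\A,k}}(e_k(v))=0 .
$$
Here the first equality uses that the sum is direct, so the mask factorizes. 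Our $v$ lies in $\Z^n\setminus\A^T\Z^n$, which is contained in $\Z^n\setminus(\A^T)^k\Z^n$, so the identity applies to it.

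It remains to show that $P_{\D_{\A,k}}(e_k(v))\neq0$ under the hypothesis $v\notin\bigcup_{j=1}^k\mathcal Z_j(\A,\D)$. For this we factor the iterated mask. Since $\D_{\A,k}=\D+\A\D+\cdots+\A^{k-1}\D$ is a direct sum, we have
$$
P_{\D_{\A,k}}(z)=\prod_{i=0}^{k-1}P_{\A^i\D}(z).
$$
For $z=e_k(v)$ and $d\in\D$,
$$
e_k(v)^{\A^i d}=e\bigl(\A^id\cdot(\A^T)^{-k}v\bigr)=e\bigl(d\cdot(\A^T)^{-(k-i)}v\bigr)=e_{k-i}(v)^d .
$$
Therefore
$$
P_{\D_{\A,k}}(e_k(v))=\prod_{j=1}^{k}P_\D(e_j(v)).
$$
This step can equally be phrased through the monomial map $\phi$, using $\phi^i(e_k(v))=e_{k-i}(v)$. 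By hypothesis no factor vanishes, so the product is nonzero. Dividing it out gives $P_{F_\ell}(e_k(v))=0$ for each $\ell$.

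The only points needing care are, first, the correct identification of the dual group of $\Z^n/\A^k\Z^n$ with $\Z^n/(\A^T)^k\Z^n$, which is the same argument as for $k=1$, and second, that the product formula actually requires the directness of the sum. Directness was recorded before Lemma \ref{lem-T}. Even without it, the $r$-fold statement of Theorem \ref{th-com} already forces distinct sums, and the product identity then holds as an identity of multisets. I expect no real obstacle beyond this bookkeeping.
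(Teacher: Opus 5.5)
Your proposal is correct and follows the paper's proof essentially step for step. You apply Theorem \ref{th-com} together with Proposition \ref{prop-basic}(i) for $\A^k$ to get $P_{F_\ell}(e_k(v))\,P_{\D_{\A,k}}(e_k(v))=0$, then use the product formula $P_{\D_{\A,k}}(e_k(v))=\prod_{j=1}^k P_\D(e_j(v))$ and the hypothesis to see that the second factor is nonzero. Your explicit check $e_k(v)^{\A^i d}=e_{k-i}(v)^d$ simply supplies the computation that the paper states without proof.
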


\begin{proof}
Since $v\notin \A^T\Z^n$, we also have $v\notin (\A^T)^k\Z^n$. By Theorem \ref{th-com}, $F_\ell \oplus \D_{\A,k}$ is a $r-$fold complete residue set modulo $\A^k\Z^n$. Using Proposition \ref{prop-basic} (i) with $\A$ replaced by $\A^k$,
$$P_{F_{\ell}}(e_k(v))\cdot P_{\D_{\A,k}}(e_k(v))=P_{F_{\ell}\oplus \D_{\A,k}}(e_k(v))=0, \quad \quad\ell=1, \cdots, t.$$
The product formula gives 
\begin{equation}\label{eq-P}
P_{\D_{\A,k}}(e_k(v))=\prod_{j=1}^{k}P_{\D}(e_j(v)).
\end{equation}
The assumption $v\not\in\bigcup_{j=1}^k{\mathcal Z}_j(\A, \D)$ means that every factor in \eqref{eq-P} is nonzero. Therefore,
$$P_{\D_{\A,k}}(e_k(v))\neq0.$$
It forces $P_{F_\ell}(e_k(v))=0$, $\ell=1, \cdots, t.$
\end{proof}

We can now establish the finite zero-covering condition for self-affine tiles, which, together with Corollary~\ref{coro-Kenyon}, completes the proof of Theorem~\ref{fzp-0}.

\begin{proof}[Proof of Theorem~\ref{fzp-0}]
Fix one positive-measure fiber, say $F=F_1$, and set
$$V:=\{z\in ({\mathbb C}^{\times})^n: P_{F}(z)=0 \}.$$
Since $P_{F}$ is a Laurent polynomial, $V$ is Zariski closed.
Let $\phi:=\phi_{\A^T}$ be the monomial map introduced in Section \ref{section:Noetherian}, that is 
$$\phi(z)
=
\left(
\prod_{j=1}^n z_j^{a_{1j}},
\ldots,
\prod_{j=1}^n z_j^{a_{nj}}
\right), \quad {\bf A}^T=(a_{ij}).$$
For $k\ge 1$, define
$$ V_k=\bigcap_{j=0}^{k-1}\phi^{-j}(V).$$
In Lemma \ref{lem-conti}, it has been proved that $\phi$ is Zariski continuous, so every $V_k$ is Zariski closed. Notice that $$V_{1}\supset V_{2}\supset V_3\supset\cdots.$$ 
Proposition \ref{prop:torus-Noetherian} gives that there is an
$N\ge 1$ such that $V_N=V_{N+1}$. Since 
$$V_{N+1}=V\cap \phi^{-1}(V_N),$$
we obtain 
$V_N\subset \phi^{-1}(V_N)$, which gives
\begin{equation}\label{eq-V}
    \phi(V_N)\subset V_N.
\end{equation}
Moreover, $P_{F}(\mathbf{1})=\#F>0$, so $\mathbf{1}\notin V$.

 Suppose, to the contrary, that there exists $v\in \Z^n\setminus \A^T\Z^n$ such that
 $$v\notin{\bigcup_{k=1}^{N}{\mathcal Z}_k(\A,\D)}.$$
 By Lemma~\ref{lem:fourier-obstruction}, $P_F(e_k(v))=0$ for all $1\le k\le N$. Therefore, $\phi^{j}(e_N(v))=e_{N-j}(v)\in V$ for all $0\le j\le N-1$ and then 
 $$e_N(v)\in \phi^{-j}(V),\qquad 0\le j\le N-1.$$
 Thus $e_N(v)\in V_N$. Since $V_N$ is forward invariant under $\phi$ (by \eqref{eq-V}),
 $$\mathbf{1}=\phi^N(e_N(v))\in V_N\subset V.$$
 However, $P_F(\mathbf{1})=\#F\neq0$. This contradiction proves the desired finite covering.
\end{proof}

\section{Skew-product structures and periodic tilings}\label{sec:skew-product}

In this section, we will prove our two main theorems.

\begin{proof}[Proof of Theorem~\ref{main2}]
The sufficiency was established in \cite[Theorem~1.1]{LR25}. We prove the necessity. Suppose that $T$ is a self-affine tile. According to \cite[Theorem 1.1]{LW96-2},  every element of each iteration $\D_{\A, k}$ has a unique representation. It means the sums in $\D_{\A, k}$ are all direct for all $k\geq1$.

Let $N\geq1$ be an integer such that
\begin{equation}\label{finite-cover}
\Z^n\setminus \A^T\Z^n\subset \bigcup_{k=1}^{N}\mathcal{Z}_{k}(\A,\D).
\end{equation}
and let  $$ \W=\{w_0,w_1,\ldots,w_{s-1}\}\subset\D_{\A, N} $$ be a set  of representatives of all residue classes modulo $\A^N\Z^n$ which occur in
$\D_{\A, N}$. We further obtain the decomposition along $\A^N$:
\begin{equation}\label{eq-decom-D}
\D_{\A, N}=\bigsqcup_{j=0}^{s-1}\left(\{w_j\}+\A^N\E_j\right),
\end{equation}
where $\E_j=\{x\in \Z^n: w_j+\A^Nx\in\D_{\A, N}\}$ and uniqueness at level $2N$ gives
\begin{equation}\label{eq-2.1}
\D_{\A,2N}=\D _{\A,N} \oplus \A^N\D _{\A,N}=\bigsqcup_{j=0}^{s-1}\left(\{w_j\}+\A^N(\E_j\oplus\D_{\A, N})\right).
\end{equation}
In particular, as  $\W\subset\D_{\A, N}$, the sums in every set $\E_j+\W$ are direct.

\medskip

{\bf Step 1: For every $j$,  $\E_j\oplus \W$ contains a complete residue  system modulo $\A^N\Z^n$.}

\medskip
Indeed, since $w_0,\ldots,w_{s-1}$ are distinct modulo  $\A^N\Z^n$,
the corresponding rows of the Fourier matrix of $ \Z^n_{\A^N} $ are
linearly independent. Hence there exist
$\{a_0, a_1, \cdots, a_{s-1}\}\subset\Z^n$ 
such that the $s\times s$ matrix
$$H=\Big(e( w_j\cdot (\A^{T})^{-N}a_{\ell})\Big)_{0\le \ell, j\le s-1}$$
is invertible. Fix $v\in\Z^n\setminus (\A^{T})^N\Z^n$, and set 
$$z_{\ell}:=v+(\A^T)^Na_\ell\in\Z^n\setminus (\A^{T})^N\Z^n,$$ 
then 
$$(\A^T)^{-j_{\ell}}z_{\ell}\in \Z^n\setminus \A^{T}\Z^n$$
for some $0\le j_{\ell}\le N-1$. So, from \eqref{finite-cover}, there is a $1\le k_{\ell}\le N$ such that 
  $$P_{\D}\left(e_{j_{\ell}+k_{\ell}}(z_{\ell})\right)=0.$$
Since $1\le k_{\ell}+j_{\ell}\le 2N-1$, the product formula gives
$$P_{\D_{\A, 2N}}\left(e_{2N}(z_{\ell})\right)=\prod_{k=1}^{2N} P_\D  \left(e_{k}( z_\ell )\right) = 0.$$
We now expand the above equation by using the  decomposition  \eqref{eq-2.1} that, for all $0\le \ell\le s-1$, 
 \begin{align*}
   0 &=P_{\D_{\A, 2N}}\left(e_{2N}(z_{\ell})\right)\\
    &=\sum_{j=0}^{s-1}\sum_{d\in\E_j\oplus\D_{\A, N} }e_{2N}(z_{\ell})^{w_j}e_N(z_{\ell})^d\\
    &=\sum_{j=0}^{s-1}e_{N}(a_{\ell})^{w_j}\cdot e_{2N}(v)^{w_j}P_{\E_j\oplus\D_{\A, N} }(e_N(v)).
   \end{align*}
The invertibility of $H$ forces each
$$e_{2N}(v)^{w_j}P_{\E_j\oplus\D_{\A, N} }(e_N(v))=0$$
and so $$P_{\E_j\oplus \D_{\A, N}}(e_N(v))=0,\quad   \forall\ v\in\Z^{n}\setminus (\A^T)^{N}\Z^n.$$
Applying Proposition \ref{prop-basic} {\rm(ii)} with $\A$ replaced by  $\A^{N}$,  we have proved that each $\E_j\oplus\D_{\A,N}$ represents every residue class modulo $\A^N\Z^n$ and so does $\E_j\oplus\W$ since $\W\equiv \D_{\A,N}\pmod{\A^N\Z^n}$.

\medskip

{\bf Step 2: $\D_{\A,N}$ is a skew-product-form digit set with respect to $\A^N$.}

\medskip

Since $\E_j\oplus\W$ represents every residue class modulo $\A^N\Z^n$, we have 
$$s\cdot\#\E_j\ge|\det \A|^N=m^N.$$ On the other hand, from \eqref{eq-decom-D}, 
$$m^N=\#\D_{\A,N}=\sum_{j=0}^{s-1}\#\E_j.$$
Consequently equality must hold in every preceding inequality: 
$$s\cdot\#\E_j=m^N,\quad 0\le j\le s-1.$$
Thus each covering  $\E_j\oplus\W$ of the finite group $ \Z^n_{\A^N} $ has exactly $m^N$ representing pairs, and
hence is unique. Therefore, $\E_j\oplus \W$ is a complete residue set modulo $\A^N\Z^n$ for every $j$. This proves that the decomposition \eqref{eq-decom-D} of $\D_{\A,N}$ is of skew-product form with respect to $\A^N$.
\end{proof}

Under the assumption that the associated self-affine set is a tile, Li and Rao \cite{LR25} proved the equivalence between a skew-product-form digit set and the periodicity of a self-replicating set. Here we show that the tiling assumption can be omitted.

\begin{theorem}\label{thm:self-replication}
Let $\A\in M_n(\Z)$ be expanding, let $\D\subset \Z^n$ satisfy $\#\D=|\det \A|:=m$, and let $N\ge 1$. Then the following statements are equivalent:
\begin{enumerate}
\item[(i)] $\D_{\A,N}$ is a skew-product-form digit set with respect to $\A^N$;
\item[(ii)] there exists a nonempty set $\J_{R}\subset\Z^n$ satisfying $\J_R=\A\J_R+\D$ and having period lattice $\A^N\Z^n$.
\end{enumerate}
\end{theorem}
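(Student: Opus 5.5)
The plan is to work throughout with the $N$-fold iterate of the set map $\Phi(X):=\A X+\D$. For a self-replicating set this iterate reads
$$\J=\A^N\J+\D_{\A,N}.$$
Both directions then reduce to comparing residue classes modulo $\A^N\Z^n$. I treat $\D_{\A,N}$ as a set, since without the tiling hypothesis we do not know a priori that the sums are direct. I always decompose it along its occupied residue classes: $\W=\{w_0,\dots,w_{s-1}\}\subset\D_{\A,N}$ are representatives of the classes modulo $\A^N\Z^n$ that occur, and
$$\E_j=\{x\in\Z^n:\ w_j+\A^Nx\in\D_{\A,N}\},$$
exactly as in \eqref{eq-decom-D}.

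\emph{(ii)$\Rightarrow$(i).} Let $\J_R$ be nonempty, $\A^N\Z^n$-periodic, with $\J_R=\A\J_R+\D$. Periodicity gives $\J_R=\W'+\A^N\Z^n$ for some set $\W'$ of residues. Iterating the self-replication $N$ times gives $\J_R=\D_{\A,N}+\A^N\J_R$.

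First I reduce this identity modulo $\A^N\Z^n$. The residues occurring in $\J_R$ are exactly those occurring in $\D_{\A,N}$, so $\W'\equiv\W$ and $\J_R=\W+\A^N\Z^n$.

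Next I intersect the identity with the coset $w_j+\A^N\Z^n$. This yields
$$w_j+\A^N\Z^n=w_j+\A^N(\E_j+\W+\A^N\Z^n),$$
so $\E_j+\W$ meets every residue class modulo $\A^N\Z^n$. Hence $s\cdot\#\E_j\ge m^N$ for every $j$. On the other hand $\sum_j\#\E_j=\#\D_{\A,N}\le m^N$.

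These two bounds force $\#\D_{\A,N}=m^N$, $s\cdot\#\E_j=m^N$ for each $j$, and that every sum $\E_j+\W$ is direct and a complete residue set modulo $\A^N\Z^n$. This is the counting of Step~2 in the proof of Theorem~\ref{main2}, now run without assuming the sums are direct. It is precisely Definition~\ref{def-spf} for $(\A^N,\D_{\A,N})$.

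\emph{(i)$\Rightarrow$(ii).} I take the candidate
$$\J_R:=\W+\A^N\Z^n=\D_{\A,N}+\A^N\Z^n,$$
which is $\A^N\Z^n$-periodic. Using $\E_j\oplus\W+\A^N\Z^n=\Z^n$ one checks directly that $\D_{\A,N}+\A^N\J_R=\J_R$. The remaining task is to upgrade this $\Phi^N$-invariance to $\Phi$-invariance.

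Let $\J':=\A\J_R+\D=\D_{\A,N+1}+\A^{N+1}\Z^n$. Since $\D_{\A,N+1}=\D_{\A,N}\oplus\A^N\D$, one has $\J'\subset\J_R$, and $\J'$ again satisfies $\J'=\D_{\A,N}+\A^N\J'$. I will prove $\J'=\J_R$ by comparing natural densities. The density of $\J_R$ is $s/m^N$. The set $\J'$ is $\A^{N+1}\Z^n$-periodic, and its density is governed by the number of residues of $\D_{\A,N+1}$ modulo $\A^{N+1}\Z^n$. For each $j$ these are the classes $w_j+\A^N(\E_j+\D)$, which reduces the problem to showing that $\E_j+\D$ covers all classes modulo $\A$ for each $j$.

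If that fails, the fallback is to replace the candidate by the union
$$\J^\ast:=\bigcup_{i=0}^{N-1}\Phi^i(\J_R).$$
This set is automatically $\Phi$-invariant because $\Phi^N(\J_R)=\J_R$. One must then show that each $\Phi^i(\J_R)$ is $\A^N\Z^n$-periodic. I would attempt this by observing that $\Phi^i(\J_R)$ is a fixed point of $X\mapsto\D_{\A,N}+\A^NX$ contained in $\J_R$ and containing a full set of the representatives $\W$. The skew-product property then propagates full cosets: if $w_j+\A^N(\E_j+\W+\A^{N+i}\Z^n)$ lies in the set, iterating the fixed-point relation should fill out $w_j+\A^N\Z^n$.

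I expect this upgrade from $\A^N$-self-replication to $\A$-self-replication, with the period lattice kept equal to $\A^N\Z^n$, to be the main obstacle. The converse direction is essentially the counting of Theorem~\ref{main2}.
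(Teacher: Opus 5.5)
\paragraph{Verdict.} Your (ii)$\Rightarrow$(i) is correct and essentially matches the paper, but (i)$\Rightarrow$(ii) stops short of proving $\Phi(\J_R)=\J_R$.

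\paragraph{What works.} In (ii)$\Rightarrow$(i) you follow the paper's argument:
\begin{itemize}
\item reduce $\J_R=\D_{\A,N}+\A^N\J_R$ modulo $\A^N\Z^n$ to get $\J_R=\W+\A^N\Z^n$;
\item intersect with each coset $w_j+\A^N\Z^n$ to get $\E_j+\W+\A^N\Z^n=\Z^n$;
\item finish with $s\cdot\#\E_j\ge m^N$ and $\sum_j\#\E_j\le m^N$.
\end{itemize}
Summing these inequalities gives $\#\D_{\A,N}=m^N$ at once, whereas the paper first counts residues modulo $\A^{2N}\Z^n$. Both are fine.

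\paragraph{The gap.} In (i)$\Rightarrow$(ii) you show $\Phi^N(\J_R)=\J_R$ and $\J'=\Phi(\J_R)\subset\J_R$, but you never conclude $\Phi(\J_R)=\J_R$. Your density route reduces to the claim that every $\E_j+\D$ meets all classes modulo $\A\Z^n$, which you leave open (``if that fails''). Your fallback rests on the unproved assertion that each $\Phi^i(\J_R)$ is $\A^N\Z^n$-periodic (``should fill out''). As written, this direction is not proved.

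\paragraph{The missing idea.} The map $\Phi(X)=\A X+\D$ preserves inclusions. From $\Phi(\J_R)\subset\J_R$ you get $\Phi^{k+1}(\J_R)\subset\Phi^{k}(\J_R)$ for all $k\ge0$, hence
\[
\J_R=\Phi^N(\J_R)\subset\Phi(\J_R)\subset\J_R .
\]
This is exactly the paper's chain
\[
\J_R=\D_{\A,2N}+\A^{2N}\Z^n=\D+\A\bigl(\D_{\A,2N-1}+\A^{2N-1}\Z^n\bigr)\subset\D+\A\J_R .
\]
There the inclusion $\D_{\A,2N-1}+\A^{2N-1}\Z^n\subset\D_{\A,N}+\A^N\Z^n$ is precisely $\Phi^{N-1}(\J_R)\subset\J_R$.

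The same remark makes your fallback vacuous: every $\Phi^i(\J_R)$ lies in $\J_R$ and the $i=0$ term is $\J_R$, so $\J^\ast=\J_R$.

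Your density route can also be closed in one line. Since $\D_{\A,N}+\A\Z^n=\D+\A\Z^n$ and $\W\equiv\D_{\A,N}$ modulo $\A^N\Z^n$, one has
\[
\E_j+\D+\A\Z^n=\E_j+\W+\A\Z^n\supset\E_j+\W+\A^N\Z^n=\Z^n .
\]
Then $\J'$ and $\J_R$ each occupy exactly $sm$ classes modulo $\A^{N+1}\Z^n$, and $\J'\subset\J_R$ forces $\J'=\J_R$.
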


\begin{proof}
{\bf (i) $\Rightarrow$ (ii).} Since $\D_{\A,N}$ is a skew-product-form digit set with respect to $\A^N$,
 there exist $\emptyset \neq \mathcal W=\{w_j\}_{j=0}^{s-1}\subset\Z^n $ and finite sets $\mathcal E_0,\ldots,\mathcal E_{s-1}\subset\Z^n$
such that
\begin{equation*}
 \D_{\A,N}=\bigsqcup_{j=0}^{s-1}\left(
\{w_j\}+\A^N\mathcal E_j
\right),
\end{equation*}
and
$\mathcal W\oplus\mathcal E_j$ is a complete residue set modulo $\A^N\Z^n$ for every $j=0,\ldots,s-1$.
Let
\begin{equation}\label{eq-1.2}
  \J_R=\W+\A^N\Z^n=\D_{\A, N}+\A^N\Z^n,  
\end{equation}
which is nonempty and has period lattice $\A^N\Z^n$. Since $\W\oplus \E_j$ is a complete residue set modulo $\A^N\Z^n$, we have
\begin{equation}\label{eq-1.1}
\mathcal E_j+\D _{\A,N}+\A^N\Z^n
=
\Z^n,
\qquad 0\leq j\leq s-1.
\end{equation}
Substituting \eqref{eq-1.1} into \eqref{eq-1.2}, one can conclude that
\begin{align*}
\J_R
&=\W+\A^N\Z^n\\
&=\bigcup_{j=0}^{s-1}\left(\{w_j\}+\A^N\left(\E_j+\D_{\A, N}+\A^N\Z^n\right)\right)
    &&\text{by \eqref{eq-1.1}}\\
&=\D_{\A, 2N}+\A^{2N}\Z^n\\
&=\D+\A(\D_{\A, 2N-1}+\A^{2N-1}\Z^n)\\
&\subset\D+\A(\D_{\A, N}+\A^N\Z^n)
    &&\text{by \eqref{eq-1.2}}\\
&=\D+\A\J_R.
\end{align*}
Conversely,
\begin{align*}
\D+\A\J_R
&=\D+\A\D_{\A, N}+\A^{N+1}\Z^n\\
&=\D_{\A, N}+\A^N(\D+\A\Z^n)\\
&\subset\D_{\A, N}+\A^N\Z^n\\
&=\J_R. &&\text{by \eqref{eq-1.2}}
\end{align*}
We have proved that  $\J_R=\A\J_R+\D$ is a self-replicating set.

{\bf (ii) $\Rightarrow$ (i).}
Iterating the self-replication identity gives 
\begin{equation}\label{eq:J-iterated}
 \J _R=\D _{\A,N}+\A^N \J _R.
\end{equation}
Because $\A^N\Z^n$ is a period lattice of the nonempty set $\J_R$, choose distinct representatives
$$\emptyset \neq \mathcal W=\{w_0, w_1, \cdots, w_{s-1}\}\subset\Z^n$$ 
such that 
\begin{equation}\label{eq:J-period}
 \J _R=\mathcal W+\A^N\Z^n.
\end{equation}
We first show that $\#\D_{\A, N}=m^N$. Reduce the sets in \eqref{eq:J-period} modulo $\A^{2N}\Z^n$. Since each coset $w_j+\A^{N}\Z^n$ splits into exactly $m^N$ residue
classes modulo $\A^{2N}\Z^n$,
$$\#(\J_R\pmod{\A^{2N}\Z^n})=s\cdot m^N.$$
Moreover, $\A^N\J_R=\A^N\W+\A^{2N}\Z^n,$  and the elements of $\A^N\W$ are distinct modulo $\A^{2N}\Z^n$, hence
$$\#(\A^N\J_R\pmod{\A^{2N}\Z^n})=s.$$
The right-hand side of \eqref{eq:J-iterated}  is a union of at most $\#\D_{\A,N}$ translates of $\A^N\J_R$. Therefore,  $$s\cdot \#\D_{\A, N}\geq s\cdot m^N.$$ Note that $s>0$ and the reverse inequality 
$$\#\D_{\A, N}\le (\#\D)^N=m^N$$
is immediate.  So, every expansion in $\D_{\A, N}$ is unique.

Reducing \eqref{eq:J-iterated} modulo $\A^N\Z^n$, we obtain
$$
\mathcal W\equiv\D _{\A,N}\pmod{\A^N\Z^n}.$$
Hence $\D _{\A,N}$ admits a disjoint decomposition along $\A^N$:
\begin{equation}\label{eq:DN-fiber}
\D _{\A,N}
=
\bigsqcup_{j=0}^{s-1}
\bigl(
\left\{w_j\right\}
+
\A^N\mathcal E_j
\bigr),
\end{equation}
where $\mathcal E_j:=\left\{x\in\Z^n:w_j+\A^Nx\in\D _{\A,N}\right\}$. 
Substituting \eqref{eq:DN-fiber} and \eqref{eq:J-period} into \eqref{eq:J-iterated}, and comparing the fibers modulo $\A^N\Z^n$, yields
$$
\mathcal E_j+\mathcal W+\A^N\Z^n
=
\Z^n
\qquad
0\leq j\leq s-1.
$$
which means that $\E_j+\W$ covers every residue class modulo $\A^N\Z^n$. Therefore, $$s\cdot\#\E_j\ge m^N.$$ On the other hand,
$$m^N=\#\D_{\A,N}=\sum_{j=0}^{s-1}\#\E_j.$$
Consequently equality must hold in every preceding inequality: 
$$\#\E_j=\frac{m^N}{s},\quad 0\le j\le s-1.$$
Thus each covering $\E_j+\W$ of the finite group $ \Z^n_{\A^N} $ has exactly $m^N$ representing pairs and is therefore unique. Thus, $\E_j\oplus\W$ is a complete residue set modulo $\A^N\Z^n$ for every $j$. This proves that $\D_{\A,N}$ is a skew-product-form digit set with respect to $\A^N$.
\end{proof}

\begin{proof}[Proof of Theorem~\ref{multi-tiling}]
Suppose that $T$ is a self-affine tile. By Theorem~\ref{main2}, there is an integer $N\ge 1$ such that $\D_{\A,N}$ is of skew-product form with respect to $\A^N$, that is,
\begin{equation*}
 \D_{\A,N}=\bigsqcup_{j=0}^{s-1}\left(
\{w_j\}+\A^N\mathcal E_j
\right),
\end{equation*}
and
$\mathcal W\oplus\mathcal E_j$ is a complete residue set modulo $\A^N\Z^n$ for every $j=0,\ldots,s-1$, where $\W=\{w_j\}_{j=0}^{s-1}$.
Theorem~\ref{thm:self-replication} shows that there exists a nonempty set $\J_R\subset\Z^n$ satisfying $\J_R=\A\J_R+\D$ and $\J_R=\W\oplus \A^N\Z^n$, which is fully periodic and self-replicating.

Let $r=|T|$. By Lemma \ref{lem:fiber-cardinality},
$\#F_\ell=r$ for every positive-measure fiber $F_\ell$.
Moreover, Theorem~\ref{th-com} shows that
$F_\ell\oplus \D_{\A,N}$ is an $r$-fold complete residue set modulo $\A^N\Z^n$. Notice that all $\E_j$ have the same cardinality, and denote it as $h$. 
Let $f_{F_\ell}$ be the representation-counting function of $F_\ell$ on $ \Z^n_{\A^N} $. Since the representation-counting function  of $\D_{\A,N}$ is $h\one_{\W}$, we have
$$
    h(f_{F_\ell} \ast \one_{\W})=r\one_{ \Z^n_{\A^N} }.
$$
Hence $h\mid r$, and, writing
$$
q:=\frac rh,
$$
we obtain
$$
f_{F_\ell}\ast \one_{\W}=q \one_{ \Z^n_{\A^N} }.
$$
$\J_R=\W\oplus \A^N\Z^n$ gives that
$$
\sum_{j\in\J_R}\one_{F_\ell}(z-j)=q, \qquad z\in\Z^n.
$$
For almost every $x\in[0,1)^n$, the fiber $F_x$ is one of the
positive-measure fiber types. Therefore, for every $z\in\Z^n$,
$$\sum_{j\in\J_R}\one_T(x+z-j)=\sum_{j\in\J_R}\one_{F_x}(z-j)=q.$$
Since $z$ is arbitrary, by taking $y=x+z$, we have
$$
\sum_{j\in \J _R}\one_T(y-j)=q,\quad\text{for almost every $y\in\R^n$.}
$$
Hence $\J_R$ is a $q$-fold tiling set of $T$, with $q\mid |T|$.
\end{proof}

\section{The PTC for planar self-affine tiles}\label{sec:planar}
In this section, we focus on the integral self-affine tile $T$ in $\R^2$. By Theorem~\ref{thm:integer-tiling-set}, it admits a tiling set $\J\subset\Z^2$. In order to replace the common tiling set $\J$ by a common fully periodic one, we use the following periodicity theorem of Greenfeld and Tao for indicator-function solutions of convolution equations on $\Z^2$.

\begin{theorem}[{\cite[Theorem~1.9]{GT25}}]\label{thm:GT-periodicity}
Let $f$ be a finitely supported function from $\Z^2$ to $\Z$, and let $c\geq1$ be an integer. If the equation $f\ast a=c$ has an indicator-function solution $a=\one_{\J}$ for some $\J\subset\Z^2$, then it also has an indicator-function solution $a_P=\one_{\J_P}$ for some fully periodic set $\J_P\subset\Z^2$.
\end{theorem}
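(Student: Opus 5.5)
Since this statement is quoted from Greenfeld and Tao, the plan below reconstructs the argument of \cite{GT25}; it is not something the present paper proves. Put $S=\operatorname{supp} f$ and $a=\one_{\J}$, so that $f\ast a=c$. The route has three stages. First, a dilation lemma. Second, a structure theorem expressing $a$ through singly periodic pieces. Third, a finite-state argument that upgrades single periodicity to full periodicity, which works in the plane because a one-periodic problem there becomes a one-dimensional problem over a finite alphabet. For the main obstacle below I also name a fallback route that sidesteps the structure theorem.

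\textbf{Dilation lemma.} For a prime $p$ let $f^{(p)}(x)=f(x/p)$ if $p\mid x$, and $0$ otherwise. The goal is
\begin{equation*}
f^{(p)}\ast a = c \quad \text{for every sufficiently large prime } p.
\end{equation*}
The natural first attempt uses the Frobenius identity $(f\ast a)^{\ast p}$ modulo $p$. Since $a$ takes values in $\{0,1\}$, one has $a^{\ast p}\equiv$ (the $p$-dilate of $a$), which should give $f^{(p)}\ast a\equiv c \pmod p$. The left side is bounded by $\sum|f|$, independently of $p$, so for $p>\sum|f|+c$ the congruence becomes an equality. A complication is that $f$ can take negative and non-unit values, so the algebra in $\mathbb{Z}[\mathbb{Z}^2]/p$ has to be carried out carefully. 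In the classical setting ($f$ an indicator) one would instead use $r$ coprime to $\#S$.

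\textbf{Structure theorem.} Take many large primes $p_1,\dots,p_k$. Subtracting the dilated equations gives that the difference operators built from $f^{(p_i)}-f$ annihilate $a$. A polynomial (Nullstellensatz-type) argument on $\mathbb{Z}^2$ should then yield a decomposition
\begin{equation*}
a=\sum_{i=1}^{m}\varphi_i,
\end{equation*}
where each $\varphi_i$ is bounded and periodic in a single direction $h_i$. The next step uses the two-dimensional geometry: lines in different rational directions meet in single points, and $a$ is $\{0,1\}$-valued. From this one aims to conclude that the equation admits an indicator solution $a'$ that is periodic in one direction $h$, possibly different from $a$ itself. A Greenfeld--Tao style argument picks the direction appropriately and passes to a limit along translates of $a$ along lines in that direction, using compactness of $\{0,1\}^{\mathbb{Z}^2}$. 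I expect this step, going from an arbitrary indicator solution to a singly periodic one when $f$ is a general integer weight rather than a tile, to be the \emph{main obstacle}.

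If that step resists, the fallback is to avoid the structure theorem altogether. Take a limit point of the translates of $a$ along $p h$, with $h$ in the direction of periodicity of one $\varphi_i$, and use the remaining dilation identities to kill the other directions.

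\textbf{Single to full periodicity.} Given an $h$-periodic indicator solution, quotient by $\langle h\rangle$. Then $\mathbb{Z}^2/\langle h\rangle\cong \mathbb{Z}\times\mathbb{Z}/g$ with $g=\gcd$ of the entries of $h$. Fix a width $L$ exceeding the diameter of $S$ in the transverse coordinate. Solutions then correspond to bi-infinite walks in a finite de Bruijn-type graph, whose vertices are $\{0,1\}$-patterns on blocks of $L$ consecutive transverse slices and whose edges encode the local constraint $f\ast a=c$. A bi-infinite walk in a finite graph passes through some vertex twice, so the graph contains a cycle. Repeating that cycle produces a solution that is periodic in the transverse direction as well as in $h$. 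This gives the fully periodic $\J_P\subset\mathbb{Z}^2$ required by the theorem.
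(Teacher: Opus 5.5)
The paper gives no proof of this statement. It is imported from \cite{GT25} and used only as a black box in Lemma~\ref{lem-pT}, so your sketch has to stand on its own. As written, it has a genuine gap at its centre.

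The step that carries the whole theorem is producing, from an arbitrary indicator solution, a (possibly different) indicator solution that is periodic in one direction. That is exactly the step you flag as the main obstacle and leave open, and your fallback does not close it. Translating $a$ by multiples of a period of one piece $\varphi_i$ fixes $\varphi_i$ but only shifts every other $\varphi_j$. So any limit of these translates is again a sum of pieces periodic in the same several directions. Moreover, the dilated identities $f^{(p)}\ast a=c$ are translation invariant and pass to pointwise limits, so every limit of translates satisfies them automatically. They carry no information that could eliminate the other directions.

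The heuristic that lines in different directions meet in a point while $a$ is $\{0,1\}$-valued does not rescue this either. Arguments of that type use level-one features: for $f=\one_F$ with $0\in F$, the separation $(\J-\J)\cap r(F-F)=\{0\}$ and the nonnegativity of the pieces in $\one_{\J}=1-\sum_{h\neq0}\varphi_h$. The separation is lost as soon as $c\ge2$, and positivity is lost as soon as $f$ has negative entries. Indeed, Greenfeld and Tao exhibit higher-level tilings of $\Z^2$ that are not even weakly periodic. So you cannot expect the given $a$ to be assembled from singly periodic pieces; a new solution must be constructed by a genuinely two-dimensional argument. That is the actual content of the cited theorem, and your proposal supplies no mechanism for it. Your last stage (pass to $\Z^2/\langle h\rangle\cong\Z\times\Z/g\Z$ and repeat a cycle in a finite graph of blocks) is correct, but it has nothing to start from.

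The dilation step is also misstated, though it can be repaired.
\begin{itemize}
\item Frobenius must be applied to the finitely supported $f$, not to $a$. In $\mathbb{F}_p[\Z^2]$ one has $f^{\ast p}\equiv f^{(p)}\pmod p$ for every integer-valued $f$; Fermat's little theorem handles the coefficients, so signs cause no difficulty. By contrast, $a^{\ast p}$ and $(f\ast a)^{\ast p}$ are undefined, because neither $a$ nor the constant $c$ is finitely supported.
\item The correct computation is $f^{(p)}\ast a\equiv f^{\ast(p-1)}\ast(f\ast a)=c\,\bigl(\sum f\bigr)^{p-1}\pmod p$. This needs $p\nmid\sum f$, hence $\sum f\neq0$, which you must derive from $c\geq1$: sum $f\ast\one_{\J}=c$ over a large box, and note that if $\sum f=0$ the left side is only a boundary term.
\item The bound $|f^{(p)}\ast a|\le\sum|f|$ then upgrades the congruence to equality for $p>\sum|f|+c$.
\item Since $f^{(p)}$ has the same $\sum f$ and $\sum|f|$, iterating gives $f^{(r)}\ast a=c$ for all $r\equiv1$ modulo the product of the small primes.
\item After translating so that $f(0)\neq0$, average $f(0)a(x)=c-\sum_{h\neq0}f(h)\,a(x-rh)$ over such $r$ and take a weak limit. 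This produces the periodic pieces directly; no Nullstellensatz is needed.
\end{itemize}
These two stages are fixable. The singly periodic stage is the missing idea.
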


We now simultaneously encode all the tiling equations into one convolution
equation.

\begin{lemma}\label{lem-pT}
Let $ F_1,\ldots,F_t\subset\Z^2$ be the finite sets obtained in Lemma \ref{lem-T}.  Then they have a common  fully periodic tiling set $\J _P\subset\Z^2$.
\end{lemma}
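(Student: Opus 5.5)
The plan is to encode the $t$ tiling equations $\one_{F_\ell}\ast\one_{\J}=1$ ($1\le\ell\le t$) from Lemma~\ref{lem-T}(iii) into a single convolution equation on $\Z^2$ with integer coefficients. We then apply Theorem~\ref{thm:GT-periodicity} once and decode the periodic solution back into the $t$ separate equations.

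\textbf{Encoding.} Since all $F_\ell$ are finite, choose an integer $M$ exceeding the diameter of $\bigcup_\ell F_\ell$ by a wide margin. Set $b:=t+1$ and define the finitely supported function
$$
f:=\sum_{\ell=1}^{t} b^{\ell-1}\,\one_{F_\ell}.
$$
For any $a=\one_{\J'}$ with $\J'\subset\Z^2$, the convolution is
$$
f\ast a=\sum_{\ell} b^{\ell-1}\,(\one_{F_\ell}\ast a).
$$
Each $\one_{F_\ell}\ast a$ takes values in $\{0,1,\dots,r\}$, where $r=\#F_\ell$. The known tiling set $\J$ gives $f\ast\one_{\J}=c$ with $c:=\sum_{\ell}b^{\ell-1}=\frac{b^t-1}{b-1}$. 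Theorem~\ref{thm:GT-periodicity} then supplies a fully periodic $\J_P$ with $f\ast\one_{\J_P}=c$.

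\textbf{Decoding.} We must show that $f\ast\one_{\J_P}=c$ forces $\one_{F_\ell}\ast\one_{\J_P}=1$ for every $\ell$. This is the main obstacle, because base-$b$ digit uniqueness fails when the values $g_\ell:=\one_{F_\ell}\ast\one_{\J_P}$ can exceed $b-1$. To avoid this, take the base $b:=r+1$ in place of $t+1$. Then each $g_\ell(z)\in\{0,\dots,r\}$ is a genuine base-$b$ digit, and $c=\sum_\ell b^{\ell-1}$ has all digits equal to $1$. By uniqueness of base-$b$ expansions, $g_\ell(z)=1$ for every $z$ and every $\ell$. Hence $\J_P$ is a common tiling set for all $F_\ell$, which is fully periodic and contained in $\Z^2$. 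Here $r$ is the common cardinality from Lemma~\ref{lem:fiber-cardinality}, and $g_\ell(z)\le\#F_\ell=r$ holds because $\one_{J}$ is an indicator function. Any $b>r$ would also work, so the auxiliary $M$ turns out to be unnecessary.

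\textbf{Wrap-up and hypothesis check.} We need $f$ integer-valued and finitely supported and $c\ge1$, both of which clearly hold. A fully periodic solution provided by Theorem~\ref{thm:GT-periodicity} is periodic under a full-rank sublattice of $\Z^2$, which is what the statement requires. The lemma then feeds into Theorem~\ref{thm:planar-ptc} by the same fiber argument as in the proof of Theorem~\ref{multi-tiling}: for almost every $x$ the fiber $F_x$ is some $F_\ell$, so $\J_P$ tiles $T$.
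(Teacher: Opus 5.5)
Your proof is correct and is essentially the paper's argument: the paper encodes the $t$ tiling equations with a base $\gamma>\max_\ell \#F_\ell$, applies Theorem~\ref{thm:GT-periodicity} once, and decodes by repeatedly reducing modulo $\gamma$, which is the same as your base-$(r+1)$ digit-uniqueness step. Just make sure the function $f$ you feed into Theorem~\ref{thm:GT-periodicity} is built with the final base $b=r+1$ from the outset rather than $t+1$, and drop the unused auxiliary $M$.
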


\begin{proof}
Choose an integer $\gamma>\max_{1\leq\ell\leq t}\#F_\ell$ and define
$$f:=\sum_{\ell=1}^{t}\gamma^{\ell-1} \one_{F_\ell}.$$
Then $f:\Z^2\to\Z$ is finitely supported. By Lemma \ref{lem-T} (i), the sets $F_\ell$ have a common tiling  set $\J$, equivalently 
$$
   \one_{F_\ell}\ast\one_{\J}=1, \quad 1\le \ell\le t.
$$
Hence, by linearity of convolution, we have
$$
\begin{aligned}
    f\ast \one_{\J} = \sum_{\ell=1}^{t} \gamma^{\ell-1} (\one_{F_\ell}\ast\one_{\J})= \sum_{\ell=1}^{t}\gamma^{\ell-1}:=c.
\end{aligned}
$$
Thus the convolution equation $f\ast a=c$ has the indicator-function solution $ a=\one_{\J}$. 
By Theorem~\ref{thm:GT-periodicity}, there exists a fully periodic
set $\J_P\subset\Z^2$ such that
\begin{equation}\label{eq:periodic-encoded}
    f\ast \one_{\J_P}=\sum_{\ell=1}^{t}\gamma^{\ell-1} (\one_{F_\ell}\ast\one_{\J_P})=c.
\end{equation}

It remains to show that the single encoded identity
\eqref{eq:periodic-encoded} implies all the individual identities. Fix $z\in\Z^2$, and for $1\leq \ell\leq t$, we define
$$
n_\ell(z):=\sum_{j\in\J_P}\one_{F_\ell}(z-j).
$$
Clearly, one has 
$$ 0\leq n_\ell(z)\leq \#F_\ell<\gamma.$$
On the other hand, \eqref{eq:periodic-encoded} gives
\begin{equation}\label{eq:base-B}
    \sum_{\ell=1}^{t}\gamma^{\ell-1}n_\ell(z) = \sum_{\ell=1}^{t}\gamma^{\ell-1}.
\end{equation}
Reducing \eqref{eq:base-B} modulo $\gamma$, we obtain
$$
    n_1(z)\equiv 1\pmod \gamma.
$$
Since $0\leq n_1(z)<\gamma$, it follows that $n_1(z)=1$.
Subtracting $1$ from both sides of \eqref{eq:base-B} and dividing by $\gamma$, we
obtain
$$
    n_2(z)+\gamma n_3(z)+\cdots+\gamma^{t-2}n_t(z)
    =
    1+\gamma+\cdots+\gamma^{t-2}.
$$
Reducing again modulo $\gamma$ gives $n_2(z)=1$.
Repeating this argument inductively yields
$$
    n_\ell(z)=\sum_{j\in\J_P}\one_{F_\ell}(z-j)=1,
    \qquad 1\leq \ell\leq t.
$$
Since $z\in\Z^2$ was arbitrary, we have proved that $F_\ell$ tiles $\Z^2$ using the common fully periodic tiling set $\J_P$. 
\end{proof}

Now we are ready to prove Theorem~\ref{thm:planar-ptc}.

\begin{proof}[Proof of Theorem~\ref{thm:planar-ptc}]
Suppose that $T$ is an integral self-affine tile in $\R^2$. By Theorem~\ref{thm:integer-tiling-set}, it admits a tiling set $\J\subset\Z^2$. Applying Lemma~\ref{lem-T}, there exist finite sets $F_1, \ldots, F_t\subset\Z^2$ admitting the same tiling set $\J$ and pairwise disjoint measurable sets $T_1, \ldots, T_t\subset[0,1)^2$ such that
\begin{equation*}
[0,1)^2=\bigsqcup_{\ell=1}^{t}T_\ell,\quad \text{up to a null set},
\end{equation*}
and
\begin{equation*}
T=\bigsqcup_{\ell=1}^{t}\left(T_{\ell}+F_\ell\right),\quad \text{up to a null set}.
\end{equation*}
By Lemma \ref{lem-pT}, there exists a fully periodic set $\J_P\subset\Z^2$ such that
\begin{equation*}
\sum_{j\in\J_P}\one_{F_\ell}(z-j)=1,\quad
z\in\Z^2,\quad 1\leq\ell\leq t.
\end{equation*}
For almost every $x\in[0,1)^2$, the fiber $F_x$ is one of the
positive-measure fiber types. Therefore, for every $z\in\Z^2$,
$$\sum_{j\in\J_P}\one_T(x+z-j)=\sum_{j\in\J_P}\one_{F_x}(z-j)=1.$$
Thus
$$
\sum_{j\in\J_P}\one_T(y-j)=1
$$
for almost every $y\in\R^2$. Hence $T$ tiles $\R^2$ at level one with the fully periodic tiling set $\J_P$.
\end{proof}

\section{Remarks and examples}\label{sec:examples}

In this section, we give several remarks and examples to illustrate the
main results obtained in this paper.

\subsection{ One-dimensional self-similar tiles.}

In this subsection, we assume that $b>1, {\bf A}=[b]$ and $\#\D=b$. The existence of $N$ was given by Li and Rao \cite{LR25}. Here we give its explicit description.  
Note that 
 $$T(b,{\D}+d)=T(b, \D)+\sum_{j=1}^{\infty}b^{-j}d.$$
The self-similar tiling property of $T (b, \D)$ is invariant under such translations. Hence,  without loss of generality, we assume that $0\in\D\subset\Z^+$. In these settings, 
$${\mathcal Z}_k(b, \D)=\{v\in\Z: P_\D (e(v/b^k))=0\}.$$
We use $\Phi_s(x)$ to denote the $s-$th cyclotomic polynomial, i.e., the minimal polynomial of the primitive $s$-th
root of unity $e(1/s)$.  The degree of $\Phi_s(x)$ is equal to the Euler-phi function $\varphi(s)$, the number of relatively prime integers in $\{1, 2, \cdots, s-1\}$.  We denote 
$$ d_{\max} =\max \{d: d\in\D \},\quad N_0=1+\left\lfloor \log_2 d_{\max} \right\rfloor.$$

\begin{theorem}\label{cyc-poly}
    Let $b>1$ and $0\in\D\subset\Z^+$ satisfy $\#\D=b$. If $T(b, \D)$ is a tile, then $$\Z\setminus b\Z\subset \bigcup_{k=1}^{N_0}{\mathcal Z}_k(b, \D).$$
\end{theorem}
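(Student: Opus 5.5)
The plan is to translate the zero condition into cyclotomic divisibility and then run a degree-counting argument along the chain of orders attached to a fixed $v$. I have not closed the main step; a likely repair for it is given at the end.

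\textbf{Step 1 (cyclotomic reformulation).} Fix $v\in\Z\setminus b\Z$ and, for $k\ge1$, set
$$g_k:=\gcd(v,b^k),\qquad s_k:=b^k/g_k .$$
Then $e(v/b^k)$ is a primitive $s_k$-th root of unity. Since $P_\D\in\Z[x]$, we have
$$v\in\mathcal Z_k(b,\D)\iff \Phi_{s_k}(x)\mid P_\D(x).$$

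\textbf{Step 2 (the orders double).} The ratio $g_{k+1}/g_k$ divides $b$, so $s_k\mid s_{k+1}$. Equality $s_{k+1}=s_k$ would force $g_{k+1}=b\,g_k$. This would give $b\mid v$, which is excluded. Hence $s_{k+1}\ge 2s_k$. Also $s_1>1$ because $b\nmid v$. Therefore
$$2\le s_1<s_2<\cdots,\qquad s_1\mid s_2\mid\cdots,\qquad s_k\ge 2^k .$$
This is where $\log_2$ enters the definition of $N_0$. By Corollary \ref{coro-Kenyon}, some $\Phi_{s_k}$ divides $P_\D$. What has to be shown is that the least such $k$ satisfies $k\le N_0$, that is, $k\le\lfloor\log_2 d_{\max}\rfloor+1$.

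\textbf{Step 3 (main obstacle: bounding the first vanishing level).} Suppose instead that $\Phi_{s_j}\nmid P_\D$ for all $j\le N_0$. The constraint $\deg\Phi_s=\varphi(s)\le d_{\max}$ alone does not bound $s$ by $2^{N_0}$, so a different route is needed.

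My first attempt is to pass to an integer fiber $F$ of $T$ and use Lemma \ref{lem:fourier-obstruction}. It gives $\Phi_{s_j}\mid P_F$ for all $j\le N_0$. By Theorem \ref{th-com}, $F\oplus\D_{b,k}$ is an $r$-fold complete residue set modulo $b^k$. I would try to get a contradiction from this through a coefficient comparison modulo $x^{s_{N_0}}-1$.

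The point is the following. On one hand, $\D\subset[0,d_{\max}]$ with $d_{\max}<2^{N_0}\le s_{N_0}$, so $\D$ injects into $\Z/s_{N_0}$. On the other hand, the product $\prod_{j\le N_0}\Phi_{s_j}$ must divide $P_F$. One then aims to show, using the divisibility chain, that $P_\D$ reduced modulo $x^{s_{N_0}}-1$ is forced to vanish at $e(1/s_{N_0})$.

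\textbf{Fallback for Step 3.} If that comparison does not close, I would reuse Step 1 of the proof of Theorem \ref{main2}. The idea is to vary $v$ within its class modulo $b^{N_0}$ so as to shift the vanishing level. Then compare the fixed-window polynomial $P_\D$, whose support has length below $2^{N_0}$, with the periodicity of $k\mapsto P_\D(e(v/b^k))$ modulo $b^k$. This comparison is the step I expect to be the real difficulty.
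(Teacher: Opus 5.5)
There is a genuine gap. Step~3 is never closed, and the premise that sends you to the fiber machinery is mistaken. You do not need to bound $s_k$ by $2^{N_0}$. The degree constraint $\varphi(s_k)=\deg\Phi_{s_k}\le d_{\max}$ already bounds $k$, provided you track the growth of $\varphi(s_k)$ rather than of $s_k$. The doubling $s_{k+1}\ge 2s_k$ from your Step~2 is the wrong quantity to iterate, because $\varphi$ need not grow when the order doubles (e.g.\ $\varphi(3)=\varphi(6)$). The missing observation uses $b\nmid v$: there is a prime $p$ with $\alpha:=\operatorname{ord}_p(v)<\beta:=\operatorname{ord}_p(b)$. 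Then $\operatorname{ord}_p(s_k)=\beta k-\alpha\ge\beta(k-1)+1$ for every $k\ge1$, and by multiplicativity of $\varphi$,
\[
\varphi(s_k)\ \ge\ \varphi\bigl(p^{\beta k-\alpha}\bigr)=p^{\beta k-\alpha-1}(p-1)\ \ge\ p^{\beta(k-1)}\ \ge\ 2^{k-1}.
\]

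Hence whenever $\Phi_{s_k}\mid P_\D$ you get $2^{k-1}\le d_{\max}$. This means $k-1\le\lfloor\log_2 d_{\max}\rfloor$, so $k\le N_0$. The estimate bounds \emph{every} vanishing level, not merely the least one. Kenyon's criterion (Corollary~\ref{coro-Kenyon}) supplies at least one vanishing level $k$ for each $v\in\Z\setminus b\Z$, and together these prove the theorem; this is precisely the paper's argument.

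Your other routes are unnecessary: the detour through Lemma~\ref{lem:fourier-obstruction} and Theorem~\ref{th-com}, the coefficient comparison modulo $x^{s_{N_0}}-1$, and the fallback of varying $v$ modulo $b^{N_0}$. As sketched, none of them contains a mechanism that would actually force $P_\D$ to vanish at $e(1/s_{N_0})$. Replace them with the two-line estimate above rather than trying to repair them.
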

\begin{proof}
   For any $v\in \Z\setminus b\Z$, according to the Kenyon's criterion, there is an integer $k\geq1$ such that $P_{\D}(e(v/b^k))=0$. We factorize $b$ and $v$ into primes as 
$$b=p_1^{\beta_1}p_2^{\beta_2}\cdots p_m^{\beta_m},\quad v=p_1^{\alpha_1}p_2^{\alpha_2}\cdots p_m^{\alpha_m}v',$$
where $\beta_j\geq1, \alpha_j\geq0$ for every $j=1, \cdots, m.$ Let 
$$s=\prod_{\alpha_j<k\beta_j }p_j^{\beta_jk-\alpha_j}.$$
Then $(e(v/b^k))$ is a primitive $s$-th
root of unity and so $\Phi_s(x)|P_{\D}(x).$ This gives the degree of $\Phi_s(x)$ must satisfies
\begin{equation}\label{eq6.1}
\varphi(s)\le \deg P_{\D}(x)=d_{\max}.
\end{equation}
As $v\not\in b\Z$, there is an index $j_v$ such that $\alpha_{j_v}<\beta_{j_v}$
and so $p_{j_v}^{\beta_{j_v}k-\alpha_{j_v}}$ is a factor of $s$. Note that
$$\varphi\left(p_{j_v}\right)=p_{j_v}^{\beta_{j_v}k-\alpha_{j_v}-1}(p_{j_v}-1).$$
It together with \eqref{eq6.1} implies that
$$p_{j_v}^{\beta_{j_v}(k-1)}(p_{j_v}-1)\le p_{j_v}^{\beta_{j_v}k-\alpha_{j_v}-1}(p_{j_v}-1)\le d_{\max}.$$
Since both $\log_x a$ and $\frac1{x}$ are decreasing functions in $(1, \infty)$, the above inequality forces  
$$k\le 1+ \log_{p_{j_v}^{\beta_{j_v}}} \frac{d_{\max}}{p_{j_v}-1}\le 1+\log_2 d_{\max}$$
One can conclude that $k\le N_0$. Therefore, $v\in \bigcup_{k=1}^{N_0}{\mathcal Z}_k(b, \D)$ and we have complete the proof.
\end{proof}

For every $N\geq N_0$, the finite zero-covering condition is still satisfied. Combining this with Theorem \ref{main2} immediately yields the following result.

\begin{corollary} Let $b$ be an integer and $0\in\D\subset\Z^+$ satisfy $\#\D=b$. If $T(b, \D)$ is a tile, then $\D_{b, N}$ is of skew product-form for all $N\geq N_0$.
\end{corollary}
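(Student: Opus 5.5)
The corollary follows by combining Theorem~\ref{cyc-poly} with Theorem~\ref{main2}. I will first check that the finite zero-covering condition is monotone in the depth, and then apply the necessity argument of Theorem~\ref{main2} at each level $N\ge N_0$.

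\textbf{Step 1: reduction to $0\in\D\subset\Z^+$.} Replacing $\D$ by $\D+d$ translates $T(b,\D)$ by a fixed vector, so the tile property is unchanged. The skew-product-form property of $\D_{b,N}$ is also unaffected: $(\D+d)_{b,N}=\D_{b,N}+d(1+b+\cdots+b^{N-1})$, and translating a skew-product-form digit set by a constant $c$ simply replaces $\W$ by $\W+c$, keeping each $\E_j$. In the corollary this normalization is already assumed, so $N_0$ is computed from $d_{\max}$.

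\textbf{Step 2: the covering holds at every depth $N\ge N_0$.} By Theorem~\ref{cyc-poly},
\[
\Z\setminus b\Z\subset\bigcup_{k=1}^{N_0}\mathcal Z_k(b,\D),
\]
and trivially $\bigcup_{k=1}^{N_0}\mathcal Z_k\subset\bigcup_{k=1}^{N}\mathcal Z_k$ for every $N\ge N_0$. Hence condition \eqref{eq-Z} of Theorem~\ref{fzp-0} holds with $N$ in place of $N_0$, for all $N\ge N_0$.

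\textbf{Step 3: apply the necessity half of Theorem~\ref{main2}.} Its proof takes as input only two things: $T$ is a tile, so all sums in $\D_{b,k}$ are direct, and some $N$ satisfies the finite covering \eqref{finite-cover}. From these it shows that $\D_{b,N}$ is of skew-product form with respect to $b^N$, for that same $N$. The argument goes through the decomposition \eqref{eq-decom-D}, then the Fourier-matrix argument of Step~1 in that proof, then the counting argument of Step~2 there. With $n=1$ and $\A=[b]$, this gives the conclusion for every $N\ge N_0$.

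The main point to verify is that the proof of Theorem~\ref{main2} yields skew-product form at exactly the depth $N$ appearing in the covering condition, not merely at some larger depth. Inspecting that proof, the index $N$ is fixed from \eqref{finite-cover} at the outset and is used consistently throughout. In particular, the bound $k_\ell+j_\ell\le 2N-1$ relies only on $k_\ell\le N$, which Step~2 supplies. So no real obstacle remains.
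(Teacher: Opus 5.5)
Your proposal is correct and follows the same route as the paper: Theorem~\ref{cyc-poly} gives the finite zero-covering at depth $N_0$, it is trivially inherited by every $N\ge N_0$, and the necessity proof of Theorem~\ref{main2} yields skew-product form at exactly the covering depth $N$. Your Step~1 on translation invariance is harmless but unnecessary, since the normalization $0\in\D\subset\Z^+$ is already part of the hypothesis.
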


\subsection{The conjecture of Li and Rao}

Let
$$
 \J _0^{\ast}=\Z^n,
\qquad
 \J _k^{\ast}=\A \J _{k-1}^*+\D ,
\quad k\geq1.
$$
Clearly,
$$
 \J _0^\ast\supset
 \J _1^\ast\supset
 \J _2^\ast\supset\cdots.
$$
Li and Rao \cite{LR25} conjectured that if $T$ is an integral
self-affine tile, then this decreasing sequence stabilizes after
finitely many steps; namely, there exists $N\geq1$ such that
$$
 \J _{N+1}^\ast= \J _N^\ast.
$$
They proved the conjecture in the one-dimensional case.  Theorem \ref{main2} gives an affirmative answer to this conjecture in arbitrary
dimensions.

\begin{corollary}
   If $T$ is an integral self-affine tile, then there is an
$N\geq1$ such that $ \J _{N+1}^\ast= \J _N^\ast.$
\end{corollary}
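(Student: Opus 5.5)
We prove the corollary by showing that the sets $\J_k^\ast$ eventually coincide with the periodic self-replicating set $\J_R$ produced in Theorem~\ref{thm:self-replication}. By Theorem~\ref{main2}, there is an $N\ge1$ such that $\D_{\A,N}$ is of skew-product form with respect to $\A^N$, with data $\W=\{w_j\}_{j=0}^{s-1}$ and $\E_0,\ldots,\E_{s-1}$. Iterating the recursion defining $\J_k^\ast$ gives the closed form
$$
\J_k^\ast=\D_{\A,k}+\A^k\Z^n,\qquad k\ge0 .
$$
This follows by a direct induction, since $\A(\D_{\A,k-1}+\A^{k-1}\Z^n)+\D=\D_{\A,k}+\A^k\Z^n$.

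In particular, $\J_N^\ast=\D_{\A,N}+\A^N\Z^n$, which is exactly the set $\J_R$ defined in \eqref{eq-1.2}. In the proof of (i)$\Rightarrow$(ii) of Theorem~\ref{thm:self-replication} it was shown, using only the skew-product structure, that $\J_R=\D+\A\J_R$. Hence
$$
\J_{N+1}^\ast=\A\J_N^\ast+\D=\A\J_R+\D=\J_R=\J_N^\ast,
$$
which is the desired stabilization. By induction, $\J_k^\ast=\J_N^\ast$ for all $k\ge N$.

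The only point requiring care is the closed form for $\J_k^\ast$, together with the observation that the self-replication identity in Theorem~\ref{thm:self-replication} was proved for precisely the set $\D_{\A,N}+\A^N\Z^n$. Neither step involves a genuine obstacle, because the substantive work is already contained in Theorem~\ref{main2}.
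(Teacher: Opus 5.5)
Your proposal is correct and follows essentially the same route as the paper: you invoke Theorem~\ref{main2} to get a skew-product level $N$, identify $\J_N^\ast$ with $\J_R=\D_{\A,N}+\A^N\Z^n$, and apply the self-replication identity from Theorem~\ref{thm:self-replication}. Your explicit induction for $\J_k^\ast=\D_{\A,k}+\A^k\Z^n$ supplies the step that the paper states without proof (``Since $\J_R=\J_N^\ast$'').
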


\begin{proof}
By Theorem \ref{main2}, there exists $N\geq1$ such that
$\D _{\A,N}$ is a skew-product-form digit set with respect to
$\A^N$. By Theorem~\ref{thm:self-replication},
$$
 \J_R 
=
\D _{\A,N}+\A^N\Z^n
$$
satisfies
$$
 \J_R =\A \J_R +\D .
$$
Since $ \J_R = \J _N^*$, we obtain
$$
 \J _{N+1}^*
=
\A \J _N^*+\D 
=
 \J _N^*.
$$
\end{proof}

In this sense, the infinite iteration associated with a self-affine tile eventually becomes
stationary at a finite arithmetic level.

\subsection{Relation with the planar result of An and Lau}

Denote by $\Z[\A ,\D ]$ the smallest $\A $-invariant lattice
containing $\D -\D $.
We say that the affine pair $(\A ,\D )$ is primitive if $\Z[\A ,\D ]=\Z^n. $ An--Lau's \cite{AL19} planar prime-dilation characterization is formulated after a
reduction to the primitive case, whereas Theorem~\ref{main2} does
not require such a reduction.  Even for a non-primitive affine pair,
the tile property is detected intrinsically by the skew-product
structure of a finite iterate $\D _{\A ,N}$.

\begin{example}
Let $\A=2I_2$ and
$\D  =\{0,3\}\times\{0,1\}.$
Then $\#\D =4=|\det \A|$. The associated self-affine set has the product form
$$
T
=
T(2,\{0,3\})\times T(2,\{0,1\})
=
[0,3]\times[0,1].
$$
Hence $T$ is a self-affine tile, using a lattice $3\Z\times\Z$.  
On the other hand, $(\A,\D )$ is not primitive since $\Z[\A,\D ]
=
3\Z\times\Z.
$
Nevertheless, $\D $ itself is a complete residue system modulo
$2\Z^2$. Consequently, it is already a skew-product-form
digit set with respect to $\A$, and one may take $N=1$ in
Theorem \ref{main2}.
\end{example}

\end{document}